\newtheorem{Theorem}{Theorem}[section]
\newtheorem{Lemma}[Theorem]{Lemma}
\newtheorem{Proposition}[Theorem]{Proposition}
\theoremstyle{definition}
\newtheorem{Problem}[Theorem]{Problem}
\theoremstyle{remark}
\newtheorem{Remark}[Theorem]{Remark}
\def\@thmcountersep{-}
\numberwithin{equation}{section}
\begin{document} 

\title[An intrinsically linked simplicial $n$-complex]{An intrinsically linked simplicial $n$-complex}

\author{Ryo Nikkuni}
\address{Department of Information and Mathematical Sciences, School of Arts and Sciences, Tokyo Woman's Christian University, 2-6-1 Zempukuji, Suginami-ku, Tokyo 167-8585, Japan}
\email{nick@lab.twcu.ac.jp}
\thanks{The author was supported by JSPS KAKENHI Grant Number 22K03297.}

\subjclass{Primary 57K45; Secondary 57M15}

\date{}


\keywords{Conway--Gordon--Sachs theorem, Linking number, van Kampen--Flores theorem}

\begin{abstract}
For any positive integer $n$, Lov\'{a}sz-Schrijver, Taniyama and Skopenkov provided examples of simplicial $n$-complexes that inevitably contain a nonsplittable two-component link of $n$-spheres, no matter how they are embedded into the Euclidean $(2n+1)$-space. In this paper, we introduce a new example of such a simplicial $n$-complex through a simple argument in piecewise linear topology and an application of the van Kampen--Flores theorem. Furthermore, we demonstrate the existence of additional such complexes through higher dimensional generalizations of the $\triangle Y$-exchange on graphs. 
\end{abstract}

\maketitle

\section{Introduction} 

Throughout this paper, we work in the piecewise linear category. We refer the reader to \cite{H69}, \cite{RS82} for the fundamentals of piecewise linear topology. Let $K$ be a finite simplicial $n$-complex, which we identify with its polyhedron in this context. It is well-known that every simplicial $n$-complex can be embedded in ${\mathbb R}^{2n+1}$. For an embedding $f$ of $K$ into $\mathbb{R}^{2n+1}$, we consider the image $f(K)$ up to ambient isotopy, where for two embeddings $f$ and $g$ of $K$ into $\mathbb{R}^{2n+1}$, the images $f(K)$ and $g(K)$ are said to be \textit{ambient isotopic} if there exists an orientation-preserving self-homeomorphism $h$ on $\mathbb{R}^{2n+1}$ such that $h(f(K)) = g(K)$. Let $\Lambda^{n}(K)$ be the set of all unordered pairs of mutually disjoint two subcomplexes of $K$, each of which is homeomorphic to an $n$-sphere. We identify any pair $(\gamma_{1},\gamma_{2})$ in $\Lambda^{n}(K)$ with the disjoint union $\gamma_{1}\sqcup \gamma_{2}$. Then for any pair $\lambda=\gamma_{1}\sqcup\gamma_{2}$ in $\Lambda^{n}(K)$, the image $f(\lambda)=f(\gamma_{1})\sqcup f(\gamma_{2})$ forms a two-component link of $n$-spheres in $f(K)$.

For a $2$-component link $L=K_{1}\sqcup K_{2}$ of $n$-spheres in ${\mathbb R}^{2n+1}$, the \textit{${\mathbb Z}_{2}$-linking number} ${\rm lk}_{2}(L)={\rm lk}_{2}(K_{1},K_{2})={\rm lk}_{2}(K_{2},K_{1})\in {\mathbb Z}_{2}$ is well-defined (cf. \cite[pp. 132--136]{Rolfsen76}). In particular, in the case of $n=1$, the following result is well-known as the \textit{Conway--Gordon--Sachs theorem}.

\begin{Theorem}\label{CGS} 
{\rm (Conway--Gordon \cite{CG83}, Sachs \cite{S84})} For any embedding $f$ of the complete graph on six vertices $K_{6}$ into ${\mathbb R}^{3}$, there exists a pair $\lambda$ in $\Lambda^{1}(K_{6})$ such that ${\rm lk}_{2}(f(\lambda))=1$. 
\end{Theorem}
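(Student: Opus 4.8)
The plan is to prove the Conway--Gordon--Sachs theorem by the classical parity argument: show that the sum $\sum_{\lambda \in \Lambda^1(K_6)} \mathrm{lk}_2(f(\lambda))$ is independent of the embedding $f$, compute it to be $1$ for one explicit embedding, and conclude that some term must be nonzero for every $f$. First I would observe that $\Lambda^1(K_6)$ consists exactly of the pairs of disjoint triangles (vertex-disjoint $3$-cycles) in $K_6$; since a partition of the $6$ vertices into two triples determines such a pair uniquely, $|\Lambda^1(K_6)| = \binom{6}{3}/2 = 10$. Thus we must show that the parity of the number of nonsplittably $\mathbb{Z}_2$-linked triangle pairs is an embedding invariant.

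The key step is invariance under a single generic crossing change. I would put $f$ in general position so that $f(K_6)$ is a generic spatial graph, and note that any two embeddings of $K_6$ in $\mathbb{R}^3$ are related by a finite sequence of ambient isotopies and crossing changes at a double point of the image of an edge pair (this is the standard fact that in codimension two one can pass between any two knottings by finitely many crossing exchanges). A crossing change involves two edges $e$ and $e'$ of $K_6$; together these edges span at most $4$ vertices, so they lie in a common pair of disjoint triangles $\gamma_1 \sqcup \gamma_2$ in at most one way when $e \subset \gamma_1$ and $e' \subset \gamma_2$ — and in fact, because $K_6$ on the remaining vertices still has enough structure, one checks that $e$ and $e'$ are \emph{simultaneously} edges of disjoint triangles in $\Lambda^1(K_6)$ for exactly an even number of pairs $\lambda$ (the relevant count is that there is exactly one way to complete $e$ and $e'$ to a pair of disjoint triangles using the two leftover vertices if $e,e'$ share no vertex, and otherwise the pair $\{e,e'\}$ never both lie in a disjoint triangle pair in the linked position). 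For each such $\lambda$ the crossing change alters $\mathrm{lk}_2(f(\lambda))$ by exactly $1$, while all other terms are unchanged; summing mod $2$, the total is preserved. (The precise bookkeeping here — that the number of affected $\lambda$ is even — is the heart of the matter and deserves the careful case analysis.)

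Next I would exhibit one embedding and compute the sum. Take the standard rectilinear embedding of $K_6$ placing the six vertices in convex position (e.g. as the vertices of an octahedron, or six points on the moment curve), and directly enumerate which of the $10$ triangle pairs are linked; the classical computation gives an odd number (in fact exactly one Hopf-linked pair for the octahedral placement, or the Petersen-graph count of linked pairs reduces mod $2$ to $1$). Hence $\sum_{\lambda} \mathrm{lk}_2(f(\lambda)) = 1$ in $\mathbb{Z}_2$ for this $f$, and by the invariance just established this holds for every embedding $f$ of $K_6$ into $\mathbb{R}^3$. Therefore the sum of ten terms in $\mathbb{Z}_2$ equals $1$, so at least one term is $1$, i.e.\ there exists $\lambda \in \Lambda^1(K_6)$ with $\mathrm{lk}_2(f(\lambda)) = 1$, as claimed.

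The main obstacle, as indicated, is the combinatorial verification that a generic crossing change on a pair of edges of $K_6$ flips an even number of the $\mathbb{Z}_2$-linking numbers $\mathrm{lk}_2(f(\lambda))$; this is where one must use the specific structure of $K_6$ (the fact that $6 = 3 + 3$ and that after removing the four vertices of two edges one has exactly two vertices left) rather than any general principle. Once that parity lemma is in hand, the rest is the explicit base-case computation, which is routine.
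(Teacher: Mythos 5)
The paper does not prove this statement; it is quoted from Conway--Gordon and Sachs, so your proposal is to be measured against the classical argument (which is indeed the crossing-change parity argument you outline) and against the alternative route the paper's own machinery provides (project to $\mathbb{R}^{2}$ and invoke van Kampen--Flores / non-planarity of $K_{5}$, as in the Remark at the end of Section 2 specialized to $n=1$). Your overall strategy --- invariance of $\sum_{\lambda}\mathrm{lk}_{2}(f(\lambda))$ under crossing changes plus a base-case computation --- is the standard and correct one.

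However, there is a genuine error at exactly the step you yourself identify as ``the heart of the matter.'' You assert that two disjoint edges $e$ and $e'$ ``lie in a common pair of disjoint triangles in at most one way,'' and again that ``there is exactly one way to complete $e$ and $e'$ to a pair of disjoint triangles using the two leftover vertices.'' This is false, and if it were true your argument would collapse: a crossing change between $e$ and $e'$ would flip exactly one linking number, so the sum mod $2$ would \emph{not} be invariant. The correct count is two. Writing $e=\{a,b\}$, $e'=\{c,d\}$ with leftover vertices $u,v$, the pairs of disjoint triangles separating $e$ from $e'$ are $\{abu,\,cdv\}$ and $\{abv,\,cdu\}$; these are two distinct elements of $\Lambda^{1}(K_{6})$, each of whose $\mathbb{Z}_{2}$-linking numbers is flipped by the crossing change, so the total changes by $2\equiv 0$. (When $e$ and $e'$ share a vertex, or $e=e'$, no pair of disjoint triangles separates them and nothing changes, as you say.) Your stated conclusion that ``the number of affected $\lambda$ is even'' is correct, but it directly contradicts the count of one that you offer in its support; as written, the key lemma is unproved and the justification given is wrong. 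With the count corrected to two, the rest of the proposal (the base case with a convex-position embedding having exactly one linked pair, hence sum $\equiv 1$) is sound and completes the proof.
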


A graph $G$ is said to be \textit{intrinsically linked} if for every embedding $f$ of $G$ into ${\mathbb R}^{3}$, the image $f(G)$ contains a nonsplittable link. Theorem \ref{CGS} implies that $K_{6}$ is intrinsically linked. In addition, in \cite{S84}, it was shown that the complete tripartite graph $K_{3,3,1}$, also denoted $P_7$, is intrinsically linked. It was also pointed out that a total of seven graphs as depicted in Fig. \ref{Petersen}, obtained from $K_{6}$ or $P_{7}$ by a finite sequence of $\triangle Y$-exchanges, are also intrinsically linked. Here, a {\it $\triangle Y$-exchange} is an operation that transforms a graph $G_{\triangle}$ into a new graph $G_{Y}$ by removing all edges of a 3-cycle $\triangle$ in $G_{\triangle}$ with edges $uv$, $vw$, and $wu$, and adding a new vertex $x$ connected to each of the vertices $u$, $v$, and $w$, as shown in Fig. \ref{Delta-Y}. This operation preserves the intrinsic linkedness for graphs. The set of these seven graphs is known as the \textit{Petersen family}, with $P_{10}$ specifically referred to as the \textit{Petersen graph}. Furthermore, Robertson--Seymour--Thomas showed in \cite{RST95} that a graph is intrinsically linked if and only if it contains a graph in the Petersen family as a \textit{minor} (see Remark \ref{minor}).

\begin{figure}[htbp]
\begin{center}
\scalebox{0.525}{\includegraphics*{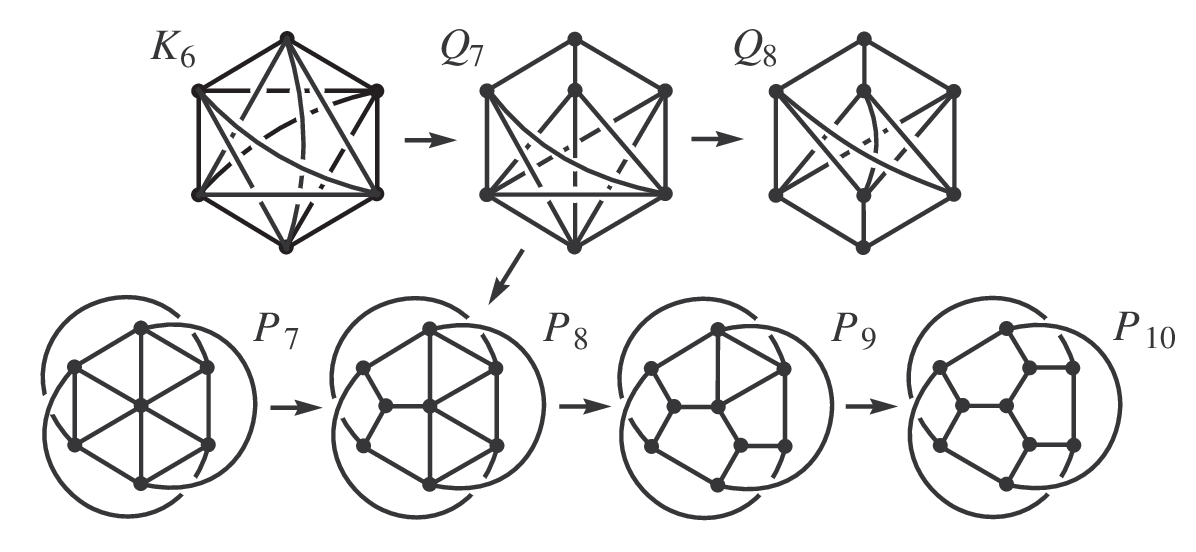}}
\caption{Petersen family (Each arrow represents a $\triangle Y$-exchange)}
\label{Petersen}
\end{center}
\end{figure}

\begin{figure}[htbp]
\begin{center}
\scalebox{0.425}{\includegraphics*{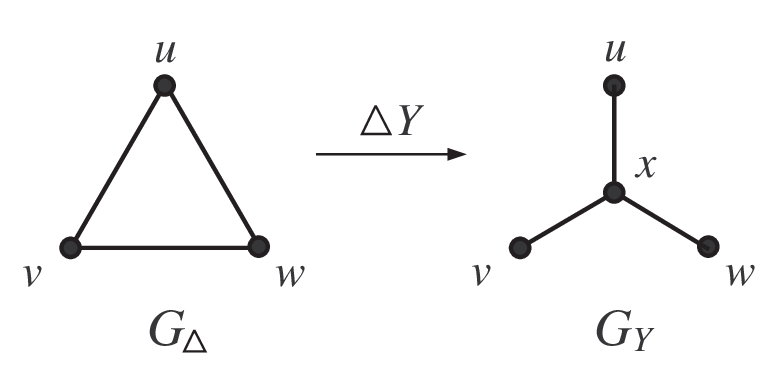}}
\caption{$\triangle Y$-exchange}
\label{Delta-Y}
\end{center}
\end{figure}

On the other hand, several results analogous to Theorem \ref{CGS} are known in higher dimensions. For any positive integer $n$, let $\sigma_{m}^{n}$ denote the $n$-skeleton of an $m$-simplex, and let $[k]^{*n+1}$ represent the $(n+1)$-fold join of $k$ points. Note that $[k]^{*n+1}$ can also be naturally regarded as a simplicial $n$-complex. Then the following results are known:

\begin{Theorem}\label{oldIL} 
\begin{enumerate}
\item {\rm (Lov\'{a}sz-Schrijver \cite{LS98}, Taniyama \cite{T00})} For any embedding $f$ of $\sigma_{2n+3}^{n}$ into ${\mathbb R}^{2n+1}$, there exists a pair $\lambda$ in $\Lambda^{n}(\sigma_{2n+3}^{n})$ such that ${\rm lk}_{2}(f(\lambda))=1$. 
\item {\rm (Skopenkov \cite{Sk03})} For any embedding $f$ of $[4]^{*n+1}$ into ${\mathbb R}^{2n+1}$, there exists a pair $\lambda$ in $\Lambda^{n}([4]^{*n+1})$ such that ${\rm lk}_{2}(f(\lambda))= 1$. 
\end{enumerate}
\end{Theorem}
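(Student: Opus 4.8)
The plan is to handle the two parts by different methods: part (1) by a Conway--Gordon style count (in the spirit of Theorem~\ref{CGS}), and part (2) by the $\mathbb{Z}_{2}$-deleted join, since, as I explain below, a naive linking-sum argument fails for the second complex. For part (1), I would first observe that $\Lambda^{n}(\sigma_{2n+3}^{n})$ is forced to be small: the $(2n+3)$-simplex has $2n+4$ vertices, two disjoint $n$-spheres inside $\sigma_{2n+3}^{n}$ each use at least $n+2$ vertices hence exactly $n+2$, and the unique $n$-sphere on an $(n+2)$-vertex set $W\subset\{1,\dots,2n+4\}$ inside $\sigma_{2n+3}^{n}$ is $\partial\Delta(W)$. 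Thus $\Lambda^{n}(\sigma_{2n+3}^{n})$ is exactly the set of $\tfrac12\binom{2n+4}{n+2}$ pairs $\lambda_{V_{1},V_{2}}=\partial\Delta(V_{1})\sqcup\partial\Delta(V_{2})$ indexed by partitions of the vertex set into two $(n+2)$-blocks $V_{1},V_{2}$, and I would set $\Phi(f)=\sum_{\{V_{1},V_{2}\}}\mathrm{lk}_{2}(f(\lambda_{V_{1},V_{2}}))\in\mathbb{Z}_{2}$.

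Next I would show $\Phi(f)$ is independent of $f$. Any two embeddings of an $n$-complex into $\mathbb{R}^{2n+1}$ are connected by a generic PL homotopy whose only non-embedding events — a dimension count of the traces in $\mathbb{R}^{2n+1}\times[0,1]$ shows this — are isolated transverse crossings of two $n$-simplices $A$ and $B$; at such an event $\Phi$ changes by the number of partitions $\{V_{1},V_{2}\}$ for which $A$ is a facet of $\partial\Delta(V_{1})$ and $B$ a facet of $\partial\Delta(V_{2})$, which is $0$ if $A$ and $B$ share a vertex and $2$ otherwise (with $A$ and $B$ fixed and vertex-disjoint, $V_{1}$ and $V_{2}$ are determined up to distributing the two leftover vertices). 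Hence $\Phi$ is a constant depending only on $n$. To evaluate it I would put the $2n+4$ vertices on the moment curve. Writing $\mathrm{lk}_{2}(\partial\Delta(V_{1}),\partial\Delta(V_{2}))$ as the mod-$2$ intersection number of the solid simplex $\Delta(V_{1})$ with $\partial\Delta(V_{2})=\sum_{u\in V_{2}}\Delta(V_{2}\setminus\{u\})$, each term is $1$ exactly when $\{V_{1},V_{2}\setminus\{u\}\}$ is the unique Radon partition of the $2n+3$ points $V_{1}\cup(V_{2}\setminus\{u\})$, which for the moment curve is the partition into odd- and even-position vertices of the sorted set; a careful count — fixing once and for all which block of $\{V_{1},V_{2}\}$ is named $V_{1}$, e.g.\ the one avoiding the first vertex — then yields exactly one contributing partition, so $\Phi\equiv1\pmod2$, and part (1) follows.

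For part (2) a linking-sum argument cannot work: already for $[4]^{*2}$ in $\mathbb{R}^{3}$ the sum $\sum_{\lambda}\mathrm{lk}_{2}(f(\lambda))$ over $\Lambda^{1}([4]^{*2})$ (disjoint pairs of $4$-cycles in $K_{4,4}$) is even for the standard join embedding, hence always even, and so cannot certify an odd term. Instead I would use the $\mathbb{Z}_{2}$-deleted join: $([4]^{*n+1})^{*2}_{\Delta}=(S_{4}^{0})^{*n+1}$, where $S_{4}^{0}$ is $K_{4,4}$ with a perfect matching removed; this is a $(2n+1)$-complex with free $\mathbb{Z}_{2}$-action, and it equivariantly contains $(S_{3}^{0})^{*n+1}=(C_{6})^{*n+1}$, which is the antipodal $(2n+1)$-sphere — the van Kampen--Flores configuration. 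For any embedding $f$ of $[4]^{*n+1}$ into $\mathbb{R}^{2n+1}$ the deleted-join map $([4]^{*n+1})^{*2}_{\Delta}\to S^{2n+1}$ is $\mathbb{Z}_{2}$-equivariant, so its restriction to that copy of $S^{2n+1}$ is an equivariant self-map and therefore has odd degree; equivalently, the induced class in $H^{2n+1}$ of the quotient of $([4]^{*n+1})^{*2}_{\Delta}$ is nonzero. The remaining step — which I expect to be the main obstacle — is to show that the nonvanishing of this equivariant obstruction is detected by, and so forces the oddness of, the linking number of a single pair $\lambda\in\Lambda^{n}([4]^{*n+1})$. For part (1) the analogous translation is automatic, since the obstruction there literally equals $\Phi$ modulo $2$; so in part (1) the delicate points are instead the crossing-change lemma (a single transverse passage of $A$ through a facet $B$ of $\gamma_{2}$ changes $\mathrm{lk}_{2}(\gamma_{1},\gamma_{2})$ by exactly $1$ when $A\subset\gamma_{1}$) and the unordered-pair bookkeeping in the moment-curve computation.
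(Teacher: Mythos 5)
Your argument for part (1) is essentially correct and is a genuinely different route from the one this paper sketches. The paper's alternative proof (the Remark at the end of Section 2) projects $f$ to ${\mathbb R}^{2n}$, rewrites $\sum_{\lambda}{\rm lk}_{2}(f(\lambda))$ as the van Kampen--Flores double-point count for the $n$-skeleton of a $(2n+2)$-face, and invokes Theorem \ref{VKF}; you instead prove directly that $\Phi(f)=\sum_{\lambda}{\rm lk}_{2}(f(\lambda))$ is an embedding-independent constant (via the crossing-change count of $2$ for each pair of disjoint $n$-simplices) and then evaluate it on the moment-curve embedding via the Radon partition. Your identification of $\Lambda^{n}(\sigma_{2n+3}^{n})$ with the $(n+2,n+2)$-vertex partitions is right, the parity of the crossing-change count is right, and the moment-curve count (exactly one contributing partition once a consistent convention for which block is filled in is fixed) does come out to $1$. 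What the paper's route buys is uniformity: the same projection lemma proves Theorem \ref{newIL}; what yours buys is self-containedness, since it does not rely on van Kampen--Flores. Your observation that the na\"{\i}ve linking-sum is identically even for $[4]^{*n+1}$ is also correct (each pair of disjoint $n$-simplices is separated by exactly $2^{n+1}$ pairs of $\Lambda^{n}([4]^{*n+1})$), so part (2) really cannot be obtained by the sum argument.

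For part (2), however, there is a genuine gap, and you have named it yourself: you establish that the Gauss map $([4]^{*n+1})^{*2}_{\Delta}\to S^{2n+1}$ restricted to the equivariant subsphere $([3]^{*n+1})^{*2}_{\Delta}\cong S^{2n+1}$ has odd degree, but this statement is true for \emph{every} embedding of \emph{every} complex containing $[3]^{*n+1}$ (it is just Borsuk--Ulam applied to a free involution) and by itself carries no information about linking. The missing step is the homological one: you must show that the $\mathbb{Z}_{2}$-fundamental class of that subsphere lies in the span, inside $H_{2n+1}\bigl(([4]^{*n+1})^{*2}_{\Delta};\mathbb{Z}_{2}\bigr)$, of the classes $[\gamma_{1}*\gamma_{2}]$ for $\lambda=\gamma_{1}\sqcup\gamma_{2}\in\Lambda^{n}([4]^{*n+1})$, since the degree of the Gauss map on $\gamma_{1}*\gamma_{2}$ equals ${\rm lk}_{2}(f(\gamma_{1}),f(\gamma_{2}))$. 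That computation (a K\"{u}nneth-for-joins calculation for $(K_{4,4}\setminus\text{matching})^{*n+1}$ together with a choice of cycle basis by pairs of disjoint spheres) is the actual content of Skopenkov's proof, not a routine afterthought; as written, your part (2) is a correct setup with the decisive step absent. The paper itself offers no proof of part (2) to compare against --- it is taken as a citation --- so nothing in Sections 2--3 can be borrowed to close this gap either.
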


We also refer the reader to \cite{KS}, \cite{Tu13} for related works. Theorem \ref{oldIL} implies that for any positive integer $n$, there exist simplicial $n$-complexes that inevitably contain a nonsplittable two-component link of $n$-spheres, no matter how they are embedded into $\mathbb{R}^{2n+1}$. Notably, $\sigma_{5}^{1}$ corresponds to the complete graph on $6$ vertices $K_{6}$, and $[4]^{*2}$ corresponds to the complete bipartite graph on $4+4$ vertices $K_{4,4}$, which contains $K_{3,3,1}=P_{7}$ as a proper minor. Our first purpose in this paper is to present another simplicial $n$-complex whose embeddings into $\mathbb{R}^{2n+1}$ always contain a nonsplittable link of $n$-spheres. For a positive integer $n$, let us define the specific simplicial $n$-complex $K^{(n)}$ abstractly as follows. For an $m$-simplex $\sigma_{m}=|a_{0}a_{1}\cdots a_{m}|$ where $a_{0},a_{1},\ldots,a_{m}$ are the $0$-simplices of $\sigma_{m}$, we denote the simplicial $m$-complex derived from $\sigma_{m}$ by $K(\sigma_{m})=K(a_{0}a_{1}\cdots a_{m})$. Consider $n+1$ mutually disjoint sets $V^{i}$, each consisting of three $0$-simplices $a_{0}^{i},a_{1}^{i}$ and $a_{2}^{i}$ ($i=0,1,\ldots,n$). Let $b$ be a $0$-simplex that is not an element in any $V^i$. Then we consider the following two types of $n$-simplices: 

\vspace{0.2cm}
\noindent
{\bf Type I.} 
The $n$-simplex $|a_{j_{0}}^{0}a_{j_{1}}^{1}\cdots a_{j_{n}}^{n}|$ spanned by the $0$-simplices $a_{j_{i}}^{i}$, one chosen from each $V^{i}$ for $i=0,1,\ldots,n$ and $j_{i}\in \{0,1,2\}$.

\vspace{0.2cm}
\noindent
{\bf Type II.} 
The $n$-simplex $|ba_{j_{0}}^{0}\cdots \hat{a}_{j_{q}}^{q}\cdots a_{j_{n}}^{n}|$ spanned by the $0$-simplices $a_{j_{i}}^{i}$, one chosen from each of $n$ of the $n+1$ sets $V^i$ for $i=0,1,\ldots,n$ and $j_{i}\in \{0,1,2\}$, and $b$. Here, $\hat{a}_{j_{q}}^{q}$ denotes the omission of $a_{j_{q}}^{q}$ for $q\in \{0,1,\ldots,n\}$.

\vspace{0.2cm}
\noindent
See Fig. \ref{K331complex} for each type of simplex if $n=2$. We now consider all $n$-simplices of Types I and II, and define $K^{(n)}$ as the union of all simplicial complexes derived from them. Namely we define 
\begin{eqnarray*}
K^{(n)} = \bigcup_{\substack{q\in\{0,1,2,\ldots,n\} \\ j_{i}\in\{0,1,2\}}}K\big(ba_{j_{0}}^{0}\cdots \hat{a}_{j_{q}}^{q}\cdots a_{j_{n}}^{n}\big)
\cup \bigcup_{j_{i}\in\{0,1,2\}} K\big(a_{j_{0}}^{0}a_{j_{1}}^{1}\cdots a_{j_{n}}^{n}\big). 
\end{eqnarray*}
In particular, $K^{(1)}$ corresponds to $K_{3,3,1}=P_{7}$ in the Petersen family, see Fig. \ref{K331P7}. Namely, while $\sigma_{2n+3}^{n}$ generalizes $K_{6}$ to higher dimensions, $K^{(n)}$ serves as a higher dimensional analogue of $K_{3,3,1}$. We also remark here that the union of all of the simplicial complexes obtained from $n$-simplices of Type II, $\bigcup_{j_{i}\in\{0,1,2\}} K\big(a_{j_{0}}^{0}a_{j_{1}}^{1}\cdots a_{j_{n}}^{n}\big)$, is isomorphic to $[3]^{*n+1}$. We denote this subcomplex of $K^{(n)}$ by $H^{(n)}$. For example, $H^{(1)}$ is none other than $K_{3,3}$. Then we have the following.

\begin{figure}[htbp]
\begin{center}
\scalebox{0.45}{\includegraphics*{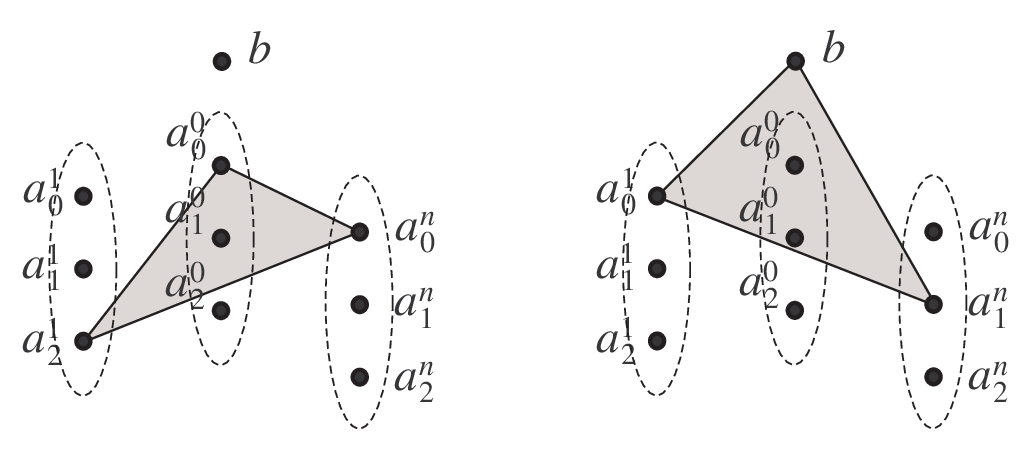}}
\caption{$|a_{0}^{0}a_{2}^{1}a_{0}^{n}|$ of Type I and $|ba_{0}^{1}a_{1}^{n}|$ of Type II ($n=2$)}
\label{K331complex}
\end{center}
\end{figure}

\begin{figure}[htbp]
\begin{center}
\scalebox{0.45}{\includegraphics*{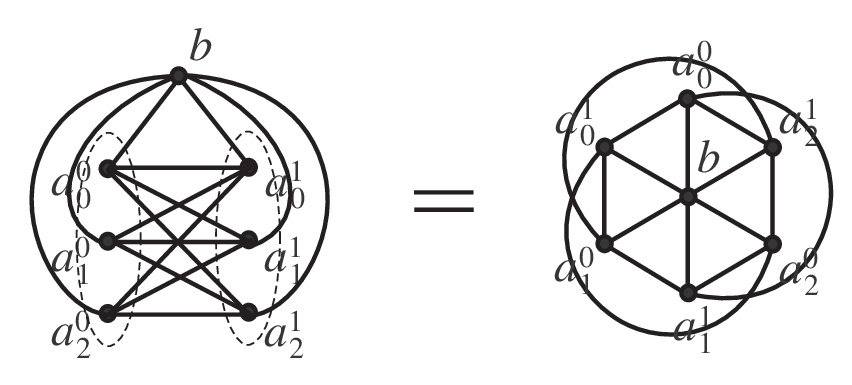}}
\caption{$K^{(1)}=K_{3,3,1}=P_{7}$}
\label{K331P7}
\end{center}
\end{figure}

\begin{Theorem}\label{newIL} 
Let $n$ be a positive integer. For every embedding $f$ of $K^{(n)}$ into ${\mathbb R}^{2n+1}$, the following holds: 
\begin{eqnarray*}
\sum_{\lambda\in \Lambda^{n}(K^{(n)})}{\rm lk}_{2}(f(\lambda))\equiv 1\pmod{2}. 
\end{eqnarray*}
\end{Theorem}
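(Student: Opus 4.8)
The plan is to run a Conway--Gordon--Sachs type argument: classify the sphere pairs, show the mod $2$ sum is an embedding invariant, and then evaluate it on a model embedding designed so that the van Kampen--Flores theorem applies.

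\textit{Classifying $\Lambda^{n}(K^{(n)})$.} First I would observe that any subcomplex of $H^{(n)}\cong[3]^{*n+1}$ homeomorphic to $S^{n}$ must contain at least two of the three $0$-simplices of each $V^{i}$: if it contained none it would lie in an $(n-1)$-dimensional subcomplex, and if it contained exactly one, say $a$, it would lie in the closed star of $a$ (a cone), so deleting the open star of $a$ would put an $n$-ball inside an $(n-1)$-complex. Since $H^{(n)}$ has only $3(n+1)$ vertices it cannot contain two disjoint $n$-spheres, so for any $\lambda=\gamma_{1}\sqcup\gamma_{2}\in\Lambda^{n}(K^{(n)})$ the vertex $b$ lies in exactly one component, say $\gamma_{1}$. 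Counting vertices again ($\gamma_{2}$ needs $\ge 2(n+1)$ of them and $\gamma_{1}$ needs $\ge n+1$ besides $b$) forces $\gamma_{2}$ to use exactly two $0$-simplices of each $V^{i}$ and $\gamma_{1}$ to use exactly one, so $\gamma_{1}=\partial|ba_{j_{0}}^{0}\cdots a_{j_{n}}^{n}|$ for some $(j)=(j_{0},\ldots,j_{n})\in\{0,1,2\}^{n+1}$ and $\gamma_{2}$ is the sub-join of $H^{(n)}$ spanned by the remaining $0$-simplices. Thus $\Lambda^{n}(K^{(n)})=\{\lambda_{(j)}=\gamma_{(j)}\sqcup\gamma_{(j)}'\}$ is a set of exactly $3^{n+1}$ pairs, and $\tau_{(j)}:=|a_{j_{0}}^{0}\cdots a_{j_{n}}^{n}|$ is the unique Type~I simplex contained in $\gamma_{(j)}$.

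\textit{Invariance.} Next I would show that $\Phi(f):=\sum_{\lambda}{\rm lk}_{2}(f(\lambda))\in\mathbb{Z}_{2}$ is independent of $f$. Two PL embeddings of $K^{(n)}$ are joined by a generic PL homotopy which is an embedding except at finitely many moments; the only such moment that can change some ${\rm lk}_{2}(f(\lambda))$ is a transverse crossing of the interiors of two disjoint $n$-simplices $\sigma,\tau$ of $K^{(n)}$, and (using the description of ${\rm lk}_{2}$ via a chain bounding one component) it changes ${\rm lk}_{2}(f(\gamma_{1}),f(\gamma_{2}))$ by $1$ precisely when one of $\gamma_{1},\gamma_{2}$ contains $\sigma$ and the other contains $\tau$. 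So it suffices to verify that for every pair of disjoint $n$-simplices $\sigma,\tau$ of $K^{(n)}$ the number of $\lambda_{(j)}$ separating them is even; using the classification this is a short case check (two Type~I simplices, or one of each type, always give the count $2$; two Type~II simplices are never disjoint). Hence $\Phi$ is an invariant.

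\textit{Evaluation.} Finally I would compute $\Phi$ on the following embedding. Choose a generic PL map $g\colon H^{(n)}\to\mathbb{R}^{2n}$ and generic heights $h(v)\in\mathbb{R}$ for the vertices $v$ of $H^{(n)}$, and define $f=f_{T}\colon K^{(n)}\to\mathbb{R}^{2n}\times\mathbb{R}=\mathbb{R}^{2n+1}$ by $f(v)=(g(v),h(v))$ on $H^{(n)}$, $f(b)=(0,T)$ with $T$ large, extended affinely over each Type~I and Type~II simplex; for generic data and large $T$ this is an embedding (the extra coordinate resolves the isolated double points of $g$, and the straight cone to the far point $f(b)$ creates no new intersections). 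For each $(j)$ the sphere $f(\gamma_{(j)})$ bounds the affine $(n+1)$-simplex $\Sigma_{(j)}$ on $\{f(b)\}\cup\{f(a_{j_{i}}^{i})\}_{i}$, hence ${\rm lk}_{2}(f(\gamma_{(j)}),f(\gamma_{(j)}'))=\#(\Sigma_{(j)}\cap f(\gamma_{(j)}'))\pmod 2$. Examining the barycentric coordinates of a point of $\Sigma_{(j)}\cap f(\gamma_{(j)}')$ shows that, for $T$ large, such points correspond bijectively to the crossings in $\mathbb{R}^{2n}$ of $g(\tau_{(j)})$ with an $n$-simplex $\sigma$ of $\gamma_{(j)}'$ at which the sheet of $\sigma$ lies above that of $\tau_{(j)}$ (the coefficient of $f(b)$ is asymptotic to a positive multiple of the height difference). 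Summing over all $(j)$ and using that each double point $\{\rho_{1},\rho_{2}\}$ of $g$ is met once as $(j)=$ the index of $\rho_{1}$ and once as $(j)=$ the index of $\rho_{2}$, the two contributions are ``$\sigma$ above $\tau$'' and ``$\tau$ above $\sigma$'', exactly one of which occurs; hence
\begin{eqnarray*}
\Phi\equiv\#\{\text{double points of }g\}\pmod 2 .
\end{eqnarray*}
By the van Kampen--Flores theorem --- in its parity form, that any generic PL map $[3]^{*n+1}\to\mathbb{R}^{2n}$ has an odd number of double points --- this number is $1$, which proves the theorem.

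I expect the main obstacle to be the analysis in the last step: verifying that $f_{T}$ is a genuine embedding, and carrying out the ``over/under'' bookkeeping carefully enough that the symmetrization over $(j)$ converts every double point of $g$ into a contribution of exactly $1$. The classification and the invariance count are, by comparison, elementary combinatorics of the multipartite complex $[3]^{*n+1}$.
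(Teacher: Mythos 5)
Your proposal is correct, and its engine is the same as the paper's --- the reduction to the van Kampen--Flores parity for $H^{(n)}\cong[3]^{*n+1}$, powered by the observation that every pair of disjoint $n$-simplices of $K^{(n)}$ is separated by exactly two sphere-pairs --- but the packaging is genuinely different. The paper (Lemma \ref{mainlem}) works with an arbitrary embedding $f$ in one pass: it projects to a generic immersion $\hat f=\pi\circ f$ of $K^{(n)}$ into $\mathbb{R}^{2n}$, writes each ${\rm lk}_{2}(f(\lambda))$ as a sum of over-crossing counts $\omega(\hat f(\sigma),\hat f(\tau))$, and notes that after summing over $\lambda$ the terms with $b\in\sigma$ occur twice and cancel, while the terms with $\sigma,\tau\subset H^{(n)}$ pair up into $\omega(\hat f(\sigma),\hat f(\tau))+\omega(\hat f(\tau),\hat f(\sigma))=l(\hat f(\sigma),\hat f(\tau))$; Theorem \ref{VKF} then finishes with no invariance step and no model embedding. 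Your route instead proves invariance of the sum under generic homotopies (using the same ``exactly two separating pairs'' count) and evaluates on a cone-type model, which costs two extra, standard-but-not-free verifications: that any two embeddings are joined by a generic level-preserving homotopy whose only linking-relevant singularities are transverse crossings of disjoint $n$-simplices, and that $f_{T}$ is an embedding whose linking numbers localize at the double points of $g$. What your version buys is a cleanly quotable invariance statement and the freedom to compute on one convenient embedding. Two small points to tighten: choose $g$ affine on each simplex, so that $f(\gamma_{(j)})$ genuinely bounds the affine $(n+1)$-simplex $\Sigma_{(j)}$ and so that all double points of $g$ occur between disjoint $n$-simplices (otherwise ``number of double points of $g$'' does not literally match the sum over disjoint pairs in Theorem \ref{VKF}); and in the classification, record explicitly that an $n$-sphere with exactly $n+2$ vertices must be the boundary of an $(n+1)$-simplex, which is the fact that pins down $\gamma_{1}$.
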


Theorem \ref{newIL} implies that there exists a pair $\lambda$ in $\Lambda^{n}(K^{(n)})$ such that $f(\lambda)$ is a nonsplittable two-component link of $n$-spheres. Here, each pair in $\Lambda^{n}(K^{(n)})$ consists of a subcomplex isomorphic to the boundary of an $(n+1)$-simplex and a subcomplex isomorphic to $[2]^{*n+1}$, see Fig. \ref{tetraocta} for the case $n=2$. In this paper, we refer to the former as an \textit{$n$-tetrahedron} and the latter as an \textit{$n$-octahedron}. Theorem \ref{newIL} states that in any embedding of $K^{(n)}$ into $\mathbb{R}^{2n+1}$, there exists a pair consisting of an $n$-tetrahedron and an $n$-octahedron that is linked. In Section 2, we prove Theorem \ref{newIL} using a novel approach that combines a simple  argument in piecewise linear topology with an application of the van Kampen--Flores theorem, which ensures the non-embeddability of certain simplicial $n$-complexes into $\mathbb{R}^{2n}$.

\begin{figure}[htbp]
\begin{center}
\scalebox{0.5}{\includegraphics*{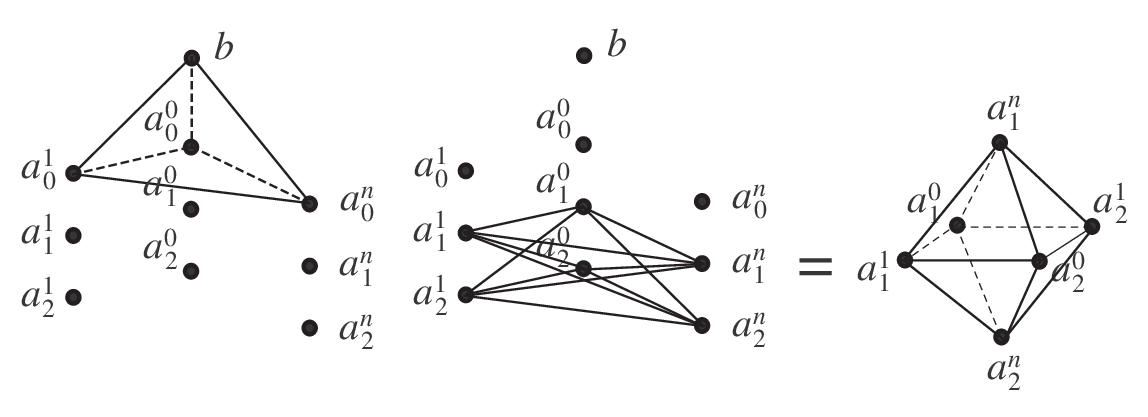}}
\caption{$n$-tetrahedron, $n$-octahedron ($n=2$)}
\label{tetraocta}
\end{center}
\end{figure}

\begin{Remark}\label{minor}
A graph $H$ is called a \textit{minor} of a graph $G$ if there exists a subgraph $G'$ of $G$ such that $H$ is obtained from $G'$ by a finite sequence of edge contractions. In particular, a minor $H$ of $G$ is called a \textit{proper minor} of $G$ if $H\neq G$. In our high dimensional case, we can see that there exists an $n$-subcomplex $L$ of $[4]^{*n+1}$ such that $K^{(n)}$ is obtained from $L$ by contracting exactly one $n$-simplex (imagine that in the case of  $n=1$, $K_{3,3,1}$ is obtained as a proper minor of $K_{4,4}$). Therefore, $K^{(n)}$ can be regarded as a higher dimensional proper minor of $[4]^{*n+1}$. 
\end{Remark}

Our second purpose in this paper is to generalize the $\triangle Y$-exchange to higher dimensions and obtain numerous simplicial $n$-complexes that are `intrinsically linked'. Let $K_{{\triangle}^{n}}$ be a simplicial $n$-complex containing an $n$-subcomplex $\triangle^{n}$ that is isomorphic to the boundary of an $(n+1)$-simplex $\sigma_{n+1}=|a_{0}a_{1}\cdots a_{n+1}|$. We identify $\triangle^{n}$ with $\partial \sigma_{n+1}$. Then, consider the join of the $(n-1)$-skeleton $\sigma_{n+1}^{n-1}$ of $\sigma_{n+1}$ with another $0$-simplex $x$ disjoint from $\sigma_{n+1}$, and let $K_{Y^{n}}$ denote the simplicial $n$-complex obtained by replacing $\triangle^{n}$ with $Y^{n}=\sigma_{n+1}^{n-1}*x$. We call this operation, which transforms $K_{\triangle^{n}}$ into $K_{Y^{n}}$, the \textit{$\triangle Y(n)$-exchange}. See Fig. \ref{Delta-Y_n} for $n=2$. Note that the $\triangle Y(1)$-exchange corresponds to the $\triangle Y$-exchange on graphs. Then we have the following.

\begin{figure}[htbp]
\begin{center}
\scalebox{0.48}{\includegraphics*{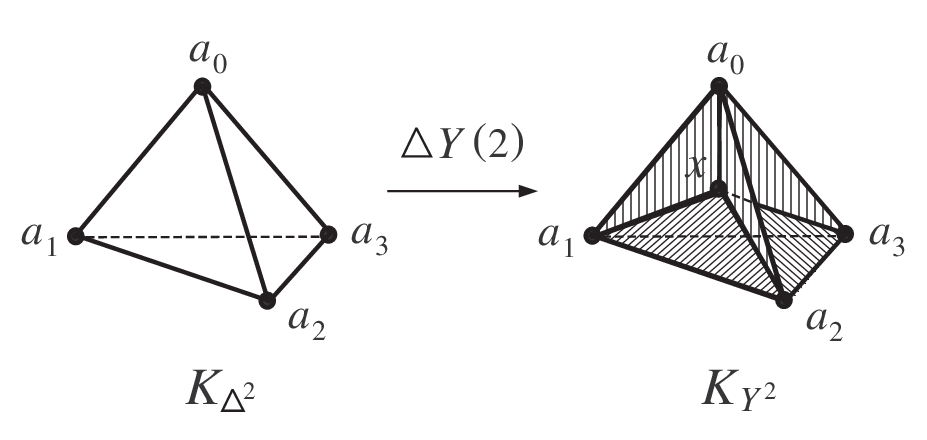}}
\caption{$\triangle Y(n)$-exchange ($n=2$)}
\label{Delta-Y_n}
\end{center}
\end{figure}

\begin{Theorem}\label{deltayil}
If for any embedding $f'$ of $K_{\triangle^{n}}$ into $\mathbb{R}^{2n+1}$ there exists a pair $\lambda'$ in $\Lambda^{n}(K_{\triangle^{n}})$ such that ${\rm lk}_{2}(f'(\lambda'))=1$, then for any embedding $f$ of $K_{Y^{n}}$ into $\mathbb{R}^{2n+1}$ there exists a pair $\lambda$ in $\Lambda^{n}(K_{Y^{n}})$ such that ${\rm lk}_{2}(f(\lambda))=1$. 
\end{Theorem}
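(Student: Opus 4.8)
The plan is to mimic the classical proof that a $\triangle Y$-exchange preserves intrinsic linkedness, adapting it to the piecewise linear setting in $\mathbb{R}^{2n+1}$. Let $f \colon K_{Y^n} \to \mathbb{R}^{2n+1}$ be an embedding, where $K_{Y^n}$ is obtained from $K_{\triangle^n}$ by replacing $\triangle^n = \partial\sigma_{n+1}$ (with $\sigma_{n+1} = |a_0a_1\cdots a_{n+1}|$) by $Y^n = \sigma_{n+1}^{n-1} * x$. I would first construct from $f$ an auxiliary embedding $f'$ of $K_{\triangle^n}$ into $\mathbb{R}^{2n+1}$. The point is that $f$ is already defined on the common subcomplex $K_{Y^n} \cap K_{\triangle^n}$, which contains in particular $\sigma_{n+1}^{n-1}$, the $(n-1)$-skeleton of $\sigma_{n+1}$; I only need to extend $f$ over the $\binom{n+2}{n+1}$ new $n$-faces of $\partial\sigma_{n+1}$, i.e., over the $n+2$ simplices $|a_0\cdots\hat a_k\cdots a_{n+1}|$, in an embedded way and with interiors disjoint from $f(K_{Y^n})$. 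The standard trick is to push each such $n$-face off toward the image point $f(x)$: each $\partial|a_0\cdots\hat a_k\cdots a_{n+1}|$ already bounds the cone $f(x) * \partial|a_0\cdots\hat a_k\cdots a_{n+1}|$, which is a subcomplex of $f(Y^n)$ and hence an embedded $n$-disk in $\mathbb{R}^{2n+1}$; take $f'$ on this face to be (a general-position perturbation of) that cone. Doing this for all $n+2$ faces, in general position relative to each other and to the rest of $f(K_{Y^n})$, yields an embedding $f'$ of $K_{\triangle^n}$ because $2n+1 \geq 2n+1$ leaves just enough room — two $n$-disks in $(2n+1)$-space can be made disjoint by a small perturbation away from their common boundary, and any residual double points can be removed since we have codimension one over the $n$-complex structure. (One should be slightly careful here: the perturbation must fix $f$ on the overlap $K_{Y^n}\cap K_{\triangle^n}$; since the cones agree with $f$ there, a relative general-position argument applies.)

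Next, by hypothesis applied to $f'$, there is a pair $\lambda' = \gamma_1' \sqcup \gamma_2' \in \Lambda^n(K_{\triangle^n})$ with $\mathrm{lk}_2(f'(\gamma_1'),f'(\gamma_2'))=1$. I would split into cases according to how $\lambda'$ meets the exchanged region. If neither $\gamma_1'$ nor $\gamma_2'$ uses any of the new $n$-faces of $\triangle^n$, then both are subcomplexes of $K_{Y^n} \cap K_{\triangle^n}$, on which $f' = f$, so $\lambda' \in \Lambda^n(K_{Y^n})$ and we are done with $\lambda = \lambda'$. Otherwise, exactly one component, say $\gamma_1'$, contains some new $n$-face(s) $|a_0\cdots\hat a_k\cdots a_{n+1}|$ (note both components cannot, since the interiors of these faces are pairwise disjoint from each other only after perturbation but they all lie "over" the same region — more to the point, an $n$-sphere subcomplex of $\triangle^n = \partial\sigma_{n+1}$ containing one such $n$-face is forced to be a large part of $\partial\sigma_{n+1}$, and two disjoint such cannot both do this; I would pin down combinatorially which $n$-sphere subcomplexes of $\triangle^n$ exist — essentially $\gamma_1'$ must be all of $\partial\sigma_{n+1}$ or close to it — and which can be disjoint from it inside $K_{\triangle^n}$). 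The key geometric move is then to replace each new $n$-face $|a_0\cdots\hat a_k\cdots a_{n+1}|$ appearing in $\gamma_1'$ by the cone $x * \partial|a_0\cdots\hat a_k\cdots a_{n+1}|$, a union of $n$ Type-$Y$ faces in $K_{Y^n}$; call the resulting subcomplex $\gamma_1$. Since $f'$ on each such face was chosen to be precisely (an isotopic copy of) that cone under $f$, the new cycle $f(\gamma_1)$ is homologous in its complement — indeed, ambient isotopic rel endpoints — to $f'(\gamma_1')$, so $\mathrm{lk}_2(f(\gamma_1),f'(\gamma_2')) = \mathrm{lk}_2(f'(\gamma_1'),f'(\gamma_2')) = 1$.

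Finally I would check that $\gamma_1 \sqcup \gamma_2'$ is a genuine element of $\Lambda^n(K_{Y^n})$: that $\gamma_1$, after the cone replacement, is still homeomorphic to an $n$-sphere (replacing an $n$-face of an $n$-sphere by a cone on its boundary is the elementary "stellar subdivision / $\triangle Y$ move" on the sphere, which preserves the homeomorphism type of $S^n$ — when several faces sharing the apex $x$ are replaced, $\gamma_1$ is the result of several such moves, all using the same new vertex $x$, and one checks the link of $x$ in $\gamma_1$ is an $(n-1)$-sphere so $\gamma_1 \cong S^n$), that $\gamma_1$ and $\gamma_2'$ are disjoint (their potentially problematic intersection lived in the exchanged region; since $\gamma_2'$ could not use the new faces of $\triangle^n$, $\gamma_2'$ is disjoint from $\sigma_{n+1}$'s top faces, and one verifies $\gamma_2'$ meets the cone part only where it already met $\gamma_1'$, namely nowhere), and that $\gamma_1 \subseteq K_{Y^n}$ (by construction every face of $\gamma_1$ is either an original face of $K_{Y^n}\cap K_{\triangle^n}$ or a Type-$Y$ face). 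Then $\lambda := \gamma_1 \sqcup \gamma_2'$ witnesses the conclusion. I expect the main obstacle to be the bookkeeping in the case analysis: precisely identifying which $n$-sphere subcomplexes $\gamma_1', \gamma_2'$ of $K_{\triangle^n}$ are possible, verifying disjointness is preserved after the cone replacement, and confirming $\gamma_1$ remains an embedded $S^n$ when more than one new face (all incident to $x$) gets replaced — this last point is the higher-dimensional analogue of the elementary graph fact that a cycle through a triangle, after $\triangle Y$, becomes a cycle through the new vertex, but in dimension $n$ it requires a genuine (if short) PL argument about links of the apex. The extension step producing $f'$ is also a point to state carefully, but it is a routine general-position argument in the top dimension.
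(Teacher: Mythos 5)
Your overall strategy --- construct $f'$ on $K_{\triangle^{n}}$ from $f$ by pushing each new $n$-face of $\triangle^{n}$ onto a general-position perturbation of the cone $f(x)*\partial|a_{0}\cdots \hat{a}_{k}\cdots a_{n+1}|$, then transport a linking-number-one pair back to $K_{Y^{n}}$ --- is the same as the paper's (the paper packages the first step as extending $f$ to an embedding $F$ of the $(n+1)$-complex $\widetilde{K}_{Y^{n}}$ and restricting), and your general-position step and the ``simplex move'' isotopy are both fine. But there is one genuine gap: you never rule out the case that the pair $\lambda'$ supplied by the hypothesis has $\triangle^{n}$ itself as a component, i.e.\ $\gamma_{1}'=\partial\sigma_{n+1}$. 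Such pairs really do exist in $\Lambda^{n}(K_{\triangle^{n}})$ (for $K^{(n)}$ the $n$-tetrahedra are exactly such components), and in that case your cone replacement produces $\sigma_{n+1}^{n-1}*x=Y^{n}$, which is a cone and not an $n$-sphere, so there is no corresponding pair in $\Lambda^{n}(K_{Y^{n}})$ at all; your own remark that ``$\gamma_{1}'$ must be all of $\partial\sigma_{n+1}$ or close to it'' shows you are circling this case without resolving it. The paper disposes of it geometrically rather than combinatorially: precisely because $f'$ was built by coning toward $f(x)$, the sphere $f'(\triangle^{n})$ bounds the $(n+1)$-ball $B=F(x*\partial\sigma_{n+1})$ with $f'(K_{\triangle^{n}})\cap B=f'(\triangle^{n})$, hence ${\rm lk}_{2}(f'(\triangle^{n}),\gamma)=0$ for every disjoint sphere $\gamma$ in $K_{\triangle^{n}}$, so the pair with ${\rm lk}_{2}=1$ cannot contain $\triangle^{n}$. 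Without this (or an equivalent) argument the proof does not close.

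There is also a combinatorial error in the replacement itself. When $\gamma_{1}'$ contains two or more $n$-faces $F_{i}$ of $\triangle^{n}$, replacing each $F_{i}$ separately by $x*\partial F_{i}$ and taking the union does not yield a sphere: already for $n=1$, a cycle through the edges $a_{0}a_{1}$ and $a_{1}a_{2}$ would acquire the tripod $xa_{0}\cup xa_{1}\cup xa_{2}$, and in general the link of $x$ in your $\gamma_{1}$ is $\bigcup_{i}\partial F_{i}$, which is not an $(n-1)$-sphere --- so the verification you propose would in fact fail rather than succeed. The correct move (implicit in the paper's map $\Phi$) is to delete the interior of $D=\bigcup_{i}F_{i}$, which is an $n$-ball in $\partial\sigma_{n+1}$ once the case $\gamma_{1}'=\triangle^{n}$ has been excluded, and glue in the single cone $x*\partial D\subseteq Y^{n}$. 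With these two repairs your argument becomes essentially the paper's proof.
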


By applying Theorem \ref{deltayil} to Theorem \ref{oldIL} (1) and Theorem \ref{newIL}, we can construct numerous simplicial $n$-complexes that inevitably contain a nonsplittable two-component link of $n$-spheres, no matter how they are embedded into $\mathbb{R}^{2n+1}$. We prove Theorem \ref{deltayil} in Section 3 and discuss an additional noteworthy `intrinsically linked' simplicial $n$-complex.

\section{van Kampen--Flores theorem and a proof of Theorem \ref{newIL}} 

In proving Theorem \ref{newIL}, let us recall the so-called \textit{van Kampen--Flores theorem}. Let $K$ be a simplicial $n$-complex. Then it is also well-known that $K$ can be generically immersed into $\mathbb{R}^{2n}$, where an immersion $\varphi$ of $K$ into $\mathbb{R}^{2n}$ is said to be \textit{generic} if all singularities of $\varphi(K)$ are transversal double points occurring between the interiors of pairs of $n$-simplices. For an integer $k\le n$, let ${\varDelta}^{k}(K)$ denote the set of all $k$-simplices in $K$. For a generic immersion $\varphi$ of $K$ into $\mathbb{R}^{2n}$ and a pair of mutually disjoint $n$-simplices $\sigma$ and $\tau$ in $\varDelta^{n}(K)$, we denote the number of all double points occurring between $\varphi(\sigma)$ and $\varphi(\tau)$ by $l(\varphi(\sigma),\varphi(\tau))$. Then the following result is known:

\begin{Theorem}{\rm (van Kampen \cite{VK33}, Flores \cite{Flores32})}\label{VKF} 
Let $n$ be a positive integer. Let $K$ be the $n$-skeleton of a $(2n+2)$-simplex $\sigma_{2n+2}^{n}$ or the $(n+1)$-fold join of $3$ points $[3]^{*n+1}$. Then for every generic immersion $\varphi$ of $K$ into ${\mathbb R}^{2n}$, the following holds: 
\begin{eqnarray*}
\sum_{\substack{\sigma,\tau\in\varDelta^{n}(K)\\ \sigma\cap\tau=\emptyset}}l(\varphi(\sigma),\varphi(\tau))\equiv 1\pmod{2}. 
\end{eqnarray*}
\end{Theorem}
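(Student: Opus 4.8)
The plan is to show that the mod-$2$ quantity in the statement does not depend on the chosen generic immersion, and then to evaluate it on one explicit model. Write $\mathrm{v}(\varphi)$ for the sum $\sum l(\varphi(\sigma),\varphi(\tau))\in{\mathbb Z}_{2}$ taken over all unordered pairs of disjoint $n$-simplices $\sigma,\tau$ of $K$; the assertion is that $\mathrm{v}(\varphi)=1$ for $K=\sigma_{2n+2}^{n}$ and for $K=[3]^{*n+1}$. The single fact that drives everything is the following combinatorial observation, valid for both of these complexes: for every $(n-1)$-simplex $\rho$ and every $n$-simplex $\tau$ of $K$ with $\rho\cap\tau=\emptyset$, there are exactly two $n$-simplices $\sigma$ of $K$ with $\rho\subset\sigma$ and $\sigma\cap\tau=\emptyset$. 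For $\sigma_{2n+2}^{n}$ such a $\sigma$ is $\rho$ together with one vertex outside $\rho\cup\tau$, and $\rho\cup\tau$ uses $n+(n+1)$ of the $2n+3$ vertices, leaving exactly $2$; for $[3]^{*n+1}$, the simplex $\rho$ misses exactly one part $V^{k}$, and $\sigma$ is $\rho$ together with one of the $3-1=2$ vertices of $V^{k}$ not lying in $\tau$. Equivalently: the formal ${\mathbb Z}_{2}$-sum of all $2n$-cells $\sigma\times\tau$ (with $\sigma,\tau\in\varDelta^{n}(K)$ and $\sigma\cap\tau=\emptyset$) is a cycle, i.e. the deleted product of $K$ is a $2n$-dimensional ${\mathbb Z}_{2}$-pseudomanifold.

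Granting this, $\mathrm{v}(\varphi)$ is independent of $\varphi$. Any two generic immersions $\varphi_{0},\varphi_{1}$ of $K$ into ${\mathbb R}^{2n}$ are joined by a generic homotopy, which breaks into elementary moves; the only move that can change $\mathrm{v}$ is the passage of the image of an $(n-1)$-face $\rho$ of $K$ across the image of an $n$-simplex $\tau$ with $\rho\cap\tau=\emptyset$, and such a move alters $l(\varphi(\sigma),\varphi(\tau))$ by one for each $n$-simplex $\sigma$ with $\rho\subset\sigma$ and $\sigma\cap\tau=\emptyset$. By the observation above there are exactly two such $\sigma$, so $\mathrm{v}$ is unchanged; equivalently, the van Kampen obstruction cocycle $\sigma\times\tau\mapsto l(\varphi(\sigma),\varphi(\tau))$ changes by a coboundary, whose total over the $2n$-cycle of the preceding paragraph is $0$. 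Hence $\mathrm{v}(\varphi)=:\mathrm{v}(K)$ is a well-defined element of ${\mathbb Z}_{2}$.

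It remains to compute $\mathrm{v}(K)$ on one convenient immersion. Take $\varphi$ simplexwise-linear, sending the vertices of $K$ to distinct points on the moment curve $t\mapsto(t,t^{2},\dots,t^{2n})$ in ${\mathbb R}^{2n}$; since any $2n+1$ of its points are affinely independent, $\varphi$ is a generic immersion. If $\sigma,\tau$ are disjoint $n$-simplices, then their $2n+2$ vertices are in general position in ${\mathbb R}^{2n}$, so by Radon's theorem they admit a unique partition into two classes with intersecting convex hulls; the classical alternating-sign (Vandermonde) dependency among points of the moment curve shows that this is the partition in which the two classes strictly interleave along the curve. Hence $l(\varphi(\sigma),\varphi(\tau))=1$ precisely when $\{\sigma,\tau\}$ realizes this interleaving partition, so $\mathrm{v}(K)$ equals the number of interleaving pairs mod $2$. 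For $\sigma_{2n+2}^{n}$ there is exactly one such pair for each of the $2n+3$ choices of the vertex to omit, so $\mathrm{v}(K)=2n+3\equiv 1\pmod 2$. For $[3]^{*n+1}$, order the vertices block by block ($V^{0}$, then $V^{1}$, and so on); an interleaving pair then amounts to a choice, in each $V^{i}$, of the two-element set of labels used by $\sigma$ and $\tau$, which gives $\binom{3}{2}^{n+1}=3^{n+1}\equiv 1\pmod 2$ pairs. In either case $\mathrm{v}(K)=1$, which is the assertion. (Alternatively, once invariance is known, $\mathrm{v}(K)=1$ follows from the Borsuk--Ulam theorem applied to the ${\mathbb Z}_{2}$-deleted join of $K$: for $[3]^{*n+1}$, say, this join is ${\mathbb Z}_{2}$-homotopy equivalent to $(C_{6})^{*n+1}\simeq S^{2n+1}$ with the antipodal action, which admits no ${\mathbb Z}_{2}$-map to $S^{2n-1}$, forcing the van Kampen obstruction, hence $\mathrm{v}(K)$, to be nonzero.)

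The step I expect to require the most care is the combinatorial observation of the first paragraph together with the recognition of its double role: it is exactly this evenness --- peculiar to $\sigma_{2n+2}^{n}$ and $[3]^{*n+1}$ --- that both makes $\mathrm{v}(\varphi)$ a homotopy invariant and makes the deleted product a $2n$-dimensional ${\mathbb Z}_{2}$-pseudomanifold, so that without it the sum in the statement would not even be well defined. The remaining ingredients, namely the moment-curve/Radon computation and the reduction of a generic homotopy to elementary moves, are classical, and I would invoke rather than reprove them.
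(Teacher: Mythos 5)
The paper states this result as a classical theorem of van Kampen and Flores and gives no proof of it --- it is only cited and then applied --- so there is no in-paper argument to compare yours against. Judged on its own, your proof is correct and is essentially the standard classical argument: the ``exactly two'' lemma (for $\rho$ an $(n-1)$-simplex and $\tau$ an $n$-simplex with $\rho\cap\tau=\emptyset$, exactly two $n$-simplices $\sigma\supset\rho$ are disjoint from $\tau$) is verified correctly for both $\sigma_{2n+2}^{n}$ and $[3]^{*n+1}$, and it is indeed the one fact that makes the mod-$2$ sum both well defined on the deleted-product cycle and invariant under the elementary moves of a generic homotopy (tangency events change each $l$ by $2$, and a face-crossing event with $\rho\cap\tau\neq\emptyset$ contributes nothing since no $\sigma\supset\rho$ is then disjoint from $\tau$). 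The moment-curve evaluation is also right: the alternating-sign Vandermonde dependency identifies the unique Radon partition of any $2n+2$ of the points as the interleaving one, giving $2n+3\equiv 1$ crossing pairs for $\sigma_{2n+2}^{n}$; for $[3]^{*n+1}$ with vertices ordered block by block, the two chosen vertices of $V^{i}$ occupy positions $2i+1$ and $2i+2$ of the union, so the alternating partition automatically places one in each class and the count is exactly $\binom{3}{2}^{n+1}=3^{n+1}\equiv 1$ --- this is the one step where a sentence of justification is worth adding, since a priori the interleaving partition of an arbitrary $(2n+2)$-set need not respect the join structure. The only ingredients you invoke without proof (general position of points on the moment curve, reduction of a generic homotopy to elementary moves) are standard and appropriately flagged as such.
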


\begin{Remark}\label{K5K33}
Theorem \ref{VKF} implies that both $\sigma_{2n+2}^n$ and $[3]^{*n+1}$ cannot be embedded in $\mathbb{R}^{2n}$. In particular, for $n=1$, this corresponds to the classical fact that both $K_{5}$ and $K_{3,3}$ cannot be embedded in $\mathbb{R}^{2}$. 
\end{Remark}

Let $f$ be an embedding of $K^{(n)}$ into $\mathbb{R}^{2n+1}$. Let $\pi$ be a natural projection from $\mathbb{R}^{2n+1}$ to $\mathbb{R}^{2n}$ defined by $\pi(x_{1},x_{2},\ldots,x_{2n},x_{2n+1}) = (x_{1},x_{2},\ldots,x_{2n})$. We denote the composition map $\pi\circ f$ from $K^{(n)}$ to $\mathbb{R}^{2n}$ by $\hat{f}$. Then, by perturbing $f(K^{(n)})$ up to ambient isotopy if necessary, we may assume that $\hat{f}$ is a generic immersion of $K^{(n)}$ into $\mathbb{R}^{2n}$. Then we have the following. 

\begin{Lemma}\label{mainlem}
Let $n$ be a positive integer. For every embedding $f$ of $K^{(n)}$ into ${\mathbb R}^{2n+1}$, the following holds: 
\begin{eqnarray*}
\sum_{\lambda\in \Lambda^{n}(K^{(n)})}{\rm lk}_{2}(f(\lambda))
\equiv 
\sum_{\substack{\sigma,\tau\in\varDelta^{n}(H^{(n)})\\ \sigma\cap\tau=\emptyset}}l(\hat{f}(\sigma),\hat{f}(\tau))\pmod{2}. 
\end{eqnarray*}
\end{Lemma}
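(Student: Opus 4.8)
The plan is to relate the $\mathbb{Z}_2$-linking numbers of pairs in $\Lambda^n(K^{(n)})$ directly to crossing numbers in the projection $\hat f$, and then to show that the only crossings that contribute come from pairs of disjoint $n$-simplices of the subcomplex $H^{(n)}$. First I would recall the standard fact from piecewise linear knot theory that for a two-component link $L = K_1 \sqcup K_2$ of $n$-spheres in $\mathbb{R}^{2n+1}$, if $\varphi = \pi \circ f$ is the generic projection to $\mathbb{R}^{2n}$, then $\mathrm{lk}_2(L) \equiv \sum l(\varphi(\sigma), \varphi(\tau)) \pmod 2$, where the sum runs over pairs of $n$-simplices $\sigma \subset K_1$, $\tau \subset K_2$ (this is the $\mathbb{Z}_2$-version of the Gauss-type formula for linking numbers via projections; cf. the computation of $\mathrm{lk}_2$ in \cite[pp.~132--136]{Rolfsen76}). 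As I noted in the excerpt, every $\lambda = \gamma_1 \sqcup \gamma_2 \in \Lambda^n(K^{(n)})$ consists of an $n$-tetrahedron $\gamma_1$ (isomorphic to $\partial\sigma_{n+1}$) and an $n$-octahedron $\gamma_2$ (isomorphic to $[2]^{*n+1}$), so summing over all $\lambda$ gives
\begin{eqnarray*}
\sum_{\lambda \in \Lambda^n(K^{(n)})} \mathrm{lk}_2(f(\lambda)) \equiv \sum_{\lambda = \gamma_1 \sqcup \gamma_2} \ \sum_{\substack{\sigma \in \varDelta^n(\gamma_1), \, \tau \in \varDelta^n(\gamma_2) \\ \sigma \cap \tau = \emptyset}} l(\hat f(\sigma), \hat f(\tau)) \pmod 2.
\end{eqnarray*}

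Next I would reorganize this double sum by grouping crossings according to the \emph{unordered pair} of disjoint $n$-simplices $\{\sigma, \tau\}$ of $K^{(n)}$ that realizes them, and count, for each such pair, the number $N(\sigma,\tau)$ of pairs $\lambda = \gamma_1 \sqcup \gamma_2 \in \Lambda^n(K^{(n)})$ with $\sigma \in \varDelta^n(\gamma_1)$ and $\tau \in \varDelta^n(\gamma_2)$ (or vice versa); then the sum above equals $\sum_{\{\sigma,\tau\}} N(\sigma,\tau)\, l(\hat f(\sigma), \hat f(\tau)) \pmod 2$. So the real combinatorial content is to show that $N(\sigma,\tau)$ is odd precisely when $\sigma$ and $\tau$ are disjoint $n$-simplices that both lie in $H^{(n)}$ (i.e. both are of Type I), and even (typically zero) otherwise. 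For this I would use the explicit description of $n$-tetrahedra and $n$-octahedra inside $K^{(n)}$: an $n$-octahedron is determined by a choice of two of the three vertices in each $V^i$, so its $n$-simplices are exactly the Type-I simplices using only those selected vertices; an $n$-tetrahedron $\partial\sigma_{n+1}$ is spanned by $n+2$ vertices, which — given the simplex structure of $K^{(n)}$ — must be $b$ together with one chosen vertex $a^i_{j_i}$ from each $V^i$, and its $n$-simplices are the $n+2$ faces, i.e. the one Type-I simplex $|a^0_{j_0}\cdots a^n_{j_n}|$ together with the $n+1$ Type-II simplices obtained by replacing one $a^i_{j_i}$ by $b$. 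Thus: (a) if $\tau$ is of Type II it can only appear in the tetrahedron side, while a Type-II simplex never lies in any octahedron, so it must be paired with a Type-I simplex $\sigma$ lying in an octahedron disjoint from that tetrahedron — I would count these configurations and check the total is even; (b) if both $\sigma, \tau$ are Type I and disjoint, I would count the pairs (octahedron containing $\tau$, tetrahedron containing $\sigma$) and the symmetric pairs, and verify the count is odd.

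The main obstacle I anticipate is exactly this parity bookkeeping in step (b) and (a): one must carefully enumerate, for a fixed disjoint pair of Type-I simplices $\sigma = |a^0_{j_0}\cdots a^n_{j_n}|$ and $\tau = |a^0_{k_0}\cdots a^n_{k_n}|$ (disjointness forcing $j_i \neq k_i$ for every $i$, hence in each $V^i$ the third vertex is determined), how many octahedra contain $\sigma$, how many tetrahedra contain $\tau$, and then add the contribution where $\sigma$ is carried by a tetrahedron and $\tau$ by an octahedron — and likewise handle the mixed Type-I/Type-II pairs — so that after cancellation mod $2$ only the "both Type I, disjoint" pairs survive with coefficient $1$. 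Once that is established, the right-hand side becomes $\sum l(\hat f(\sigma),\hat f(\tau))$ over disjoint $\sigma,\tau \in \varDelta^n(H^{(n)})$, which is exactly the claimed quantity. A secondary point requiring care is the perturbation/genericity setup: one must make sure that the same generic projection $\hat f$ simultaneously computes $\mathrm{lk}_2$ for all the (finitely many) pairs $f(\lambda)$ and has only transversal double points between interiors of $n$-simplices, but this is routine by a general-position argument after an ambient isotopy, as already remarked before the statement of the lemma. Combining the linking-number-via-projection formula with the parity count then yields Lemma \ref{mainlem}, and feeding $H^{(n)} \cong [3]^{*n+1}$ into Theorem \ref{VKF} immediately gives Theorem \ref{newIL}.
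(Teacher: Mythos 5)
Your overall strategy --- express each $\mathrm{lk}_2(f(\lambda))$ via crossings of the projection $\hat f$, then reorganize the double sum over pairs of disjoint $n$-simplices and do a parity count --- is the paper's strategy. But there is a genuine error at the very first step that propagates and makes the parity bookkeeping in your steps (a) and (b) impossible to carry out.

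The ``standard fact'' you invoke, namely $\mathrm{lk}_2(K_1\sqcup K_2)\equiv\sum_{\sigma\subset K_1,\,\tau\subset K_2} l(\varphi(\sigma),\varphi(\tau))\pmod 2$ with $l$ the \emph{total} number of double points, is false. That sum is the total number of double points between $\varphi(K_1)$ and $\varphi(K_2)$, which splits as (points where $K_1$ passes over $K_2$) plus (points where $K_2$ passes over $K_1$); each of those two counts is $\equiv\mathrm{lk}_2$ mod $2$, so their sum is $\equiv 2\,\mathrm{lk}_2\equiv 0\pmod 2$ for \emph{every} link. The correct projection formula counts only the double points where one specified component crosses over the other --- an \emph{ordered} count $\omega(\hat f(\sigma),\hat f(\tau))$, which is what the paper uses. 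This is not a cosmetic issue: with the symmetric count $l$, your reorganized sum is $\sum_{\{\sigma,\tau\}}N(\sigma,\tau)\,l(\hat f(\sigma),\hat f(\tau))$, and your plan requires $N(\sigma,\tau)$ to be odd exactly when $\sigma,\tau\in\varDelta^n(H^{(n)})$. But for \emph{every} disjoint pair $\{\sigma,\tau\}$ of $n$-simplices of $K^{(n)}$ that can be separated at all, there are exactly two pairs $\lambda\in\Lambda^n(K^{(n)})$ separating them (you can check this with your own description of the tetrahedra and octahedra), so $N(\sigma,\tau)=2$ in both cases and the parity of $N$ never distinguishes them; your sum collapses to $0$, while the right-hand side of the lemma is odd by van Kampen--Flores.

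What actually saves the argument --- and is the heart of the paper's proof --- is the asymmetry of $\omega$. Writing the reorganized sum with the ordered count, each separated pair $\{\sigma,\tau\}$ contributes one term of the form $\omega(\hat f(\cdot),\hat f(\cdot))$ for each of its two separating $\lambda$'s, with the tetrahedron-side simplex in the first slot. If $\sigma$ contains $b$, then $\sigma$ lies on the tetrahedron side in \emph{both} $\lambda$'s, so the contribution is $2\,\omega(\hat f(\sigma),\hat f(\tau))\equiv 0$. If both $\sigma$ and $\tau$ lie in $H^{(n)}$, then the roles swap between the two $\lambda$'s ($\sigma$ is in the tetrahedron for one and in the octahedron for the other), so the contribution is $\omega(\hat f(\sigma),\hat f(\tau))+\omega(\hat f(\tau),\hat f(\sigma))=l(\hat f(\sigma),\hat f(\tau))$, which is exactly how the symmetric count $l$ of the van Kampen--Flores theorem emerges on the right-hand side. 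Your genericity remarks and your identification of the tetrahedra, octahedra, and disjointness conditions are all fine; replacing $l$ by $\omega$ in the linking formula and tracking which side of each $\lambda$ carries $\sigma$ is the missing idea.
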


\begin{proof}
For a pair of mutually disjoint $n$-simplices $\sigma$ and $\tau$ in $K^{(n)}$, let $\omega(\hat{f}(\sigma), \hat{f}(\tau))$ denote the number of all double points where $\hat{f}(\tau)$ crosses over $\hat{f}(\sigma)$ with respect to the projection $\pi$. Let $\Gamma^{1}$ denote the set of all $n$-tetrahedra in $K^{(n)}$, and let $\Gamma^{2}$ denote the set of all $n$-octahedra in $K^{(n)}$. Then any pair $\lambda$ in $\Lambda^{n}(K^{(n)})$ consists of a pair of an $n$-tetrahedron $\gamma_{1}$ in $\Gamma^{1}$ and an $n$-octahedron $\gamma_{2}$ in $\Gamma^{2}$ that are mutually disjoint. Then, the $\mathbb{Z}_{2}$-linking number of the two-component link $f(\lambda)$ is calculated as follows: 
\begin{eqnarray}\label{lkomega}
{\rm lk}_{2}(f(\lambda)) = {\rm lk}_{2}(f(\gamma_{1}),f(\gamma_{2})) 
\equiv\sum_{\substack{\sigma,\tau\in \varDelta^{n}(\lambda)\\ \sigma\subset\gamma_{1},\,\tau\subset\gamma_{2}}}\omega(\hat{f}(\sigma),\hat{f}(\tau))\pmod{2}. 
\end{eqnarray}
Thus by (\ref{lkomega}), we have 
\begin{eqnarray}\label{lksum}
\sum_{\lambda\in \Lambda^{n}(K^{(n)})}{\rm lk}_{2}(f(\lambda))
&=& \sum_{\substack{\lambda=\gamma_{1}\sqcup\gamma_{2} \\ \gamma_{1}\in \Gamma^{1},\,\gamma_{2}\in\Gamma^{2}}}{\rm lk}_{2}(f(\gamma_{1}),f(\gamma_{2}))\\
&=& \sum_{\substack{\lambda=\gamma_{1}\sqcup\gamma_{2} \\ \gamma_{1}\in \Gamma^{1},\,\gamma_{2}\in\Gamma^{2}}}
\bigg(\sum_{\substack{\sigma,\tau\in \varDelta^{n}(\lambda)\\ \sigma\subset\gamma_{1},\,\tau\subset\gamma_{2}}}\omega(\hat{f}(\sigma),\hat{f}(\tau))\bigg).\nonumber
\end{eqnarray}

Here, in $\omega(\hat{f}(\sigma),\hat{f}(\tau))$ appearing on the right side of (\ref{lksum}), the pair of mutually disjoint $n$-simplies $\sigma$ and $\tau$ can be one of the following two cases: 

\vspace{0.2cm}
\noindent
\textbf{Case 1.} $\sigma$ is an $n$-simplex containing $b$, and $\tau$ is an $n$-simplex in $\varDelta^{n}(H^{(n)})$. 

\vspace{0.2cm}
\noindent
\textbf{Case 2.}  Both $\sigma$ and $\tau$ are $n$-simplices in $\varDelta^{n}(H^{(n)})$. 

\vspace{0.2cm}
\noindent 
First, in Case 1, it suffices to consider that $\sigma = |ba_{0}^{0}\cdots a_{0}^{n-1}|$ and $\tau = |a_{1}^{0}a_{1}^{1}\cdots a_{1}^{n}|$. Then there exist exactly two pairs $\lambda=\gamma_{1}\sqcup \gamma_{2}$ and $\lambda'=\gamma'_{1}\sqcup \gamma'_{2}$ in $\Lambda^{n}(K^{(n)})$ such that $\sigma$ and $\tau$ belong to separate components, where 
\begin{eqnarray*}
&&\gamma_{1}=\partial |ba_{0}^{0}\cdots a_{0}^{n-1}a_{0}^{n}|,\ 
\gamma_{2}=\{a_{1}^{0},a_{2}^{0}\}*\{a_{1}^{1},a_{2}^{1}\}*\cdots * \{a_{1}^{n-1},a_{2}^{n-1}\}*\{a_{1}^{n},a_{2}^{n}\},\\
&&\gamma'_{1}=\partial |ba_{0}^{0}\cdots a_{0}^{n-1}a_{2}^{n}|,\ 
\gamma'_{2}=\{a_{1}^{0},a_{2}^{0}\}*\{a_{1}^{1},a_{2}^{1}\}*\cdots * \{a_{1}^{n-1},a_{2}^{n-1}\}*\{a_{0}^{n},a_{1}^{n}\}. 
\end{eqnarray*}
See Fig. \ref{n_tetraocta1} (1) for $n=2$. Since $\sigma$ is contained in both $\gamma_{1}$ and $\gamma'_{1}$, and $\tau$ is contained in both $\gamma_{2}$ and $\gamma'_{2}$, the term $2\omega(\hat{f}(\sigma),\hat{f}(\tau))$ appears on the right side of (\ref{lksum}) and vanishes modulo $2$. 
Next, in Case 2, it suffices to consider that $\sigma = |a_{0}^{0}a_{0}^{1}\cdots a_{0}^{n}|$ and $\tau = |a_{1}^{0}a_{1}^{1}\cdots a_{1}^{n}|$. Then there exist exactly two pairs $\lambda=\gamma_{1}\sqcup \gamma_{2}$ and $\lambda'=\gamma'_{1}\sqcup \gamma'_{2}$ in $\Lambda^{n}(K^{(n)})$ such that $\sigma$ and $\tau$ belong to separate components, where 
\begin{eqnarray*}
&&\gamma_{1}=\partial |ba_{0}^{0}a_{0}^{1}\cdots a_{0}^{n}|,\ 
\gamma_{2}=\{a_{1}^{0},a_{2}^{0}\}*\{a_{1}^{1},a_{2}^{1}\}*\cdots * \{a_{1}^{n-1},a_{2}^{n-1}\}*\{a_{1}^{n},a_{2}^{n}\},\\
&&\gamma'_{1}=\partial |ba_{1}^{0}a_{1}^{1}\cdots a_{1}^{n}|,\ 
\gamma'_{2}=\{a_{0}^{0},a_{2}^{0}\}*\{a_{0}^{1},a_{2}^{1}\}*\cdots * \{a_{0}^{n-1},a_{2}^{n-1}\}*\{a_{0}^{n},a_{2}^{n}\}. 
\end{eqnarray*}
See Fig. \ref{n_tetraocta1} (2) for $n=2$. Since $\sigma$ is contained in both $\gamma_{1}$ and $\gamma'_{2}$, and $\tau$ is contained in both $\gamma'_{1}$ and $\gamma_{2}$, the term $\omega(\hat{f}(\sigma),\hat{f}(\tau)) + \omega(\hat{f}(\tau),\hat{f}(\sigma))$ appears on the right side of (\ref{lksum}) and is equal to $l(\hat{f}(\tau),\hat{f}(\sigma))$. Hence we have 
\begin{eqnarray}\label{lksum2}
\sum_{\substack{\lambda=\gamma_{1}\sqcup\gamma_{2} \\ \gamma_{1}\in \Gamma^{1},\,\gamma_{2}\in\Gamma^{2}}}
\bigg(\sum_{\substack{\sigma,\tau\in \varDelta^{n}(\lambda)\\ \sigma\subset\gamma_{1},\,\tau\subset\gamma_{2}}}\omega(\hat{f}(\sigma),\hat{f}(\tau))\bigg)
&\equiv& \sum_{\substack{\sigma,\tau\in\varDelta^{n}(H^{(n)})\\ \sigma\cap\tau=\emptyset}}l(\hat{f}(\sigma),\hat{f}(\tau))\pmod{2}. 
\end{eqnarray}
By (\ref{lksum}) and (\ref{lksum2}), we have the result. 
\end{proof}

\begin{figure}[htbp]
\begin{center}
\scalebox{0.525}{\includegraphics*{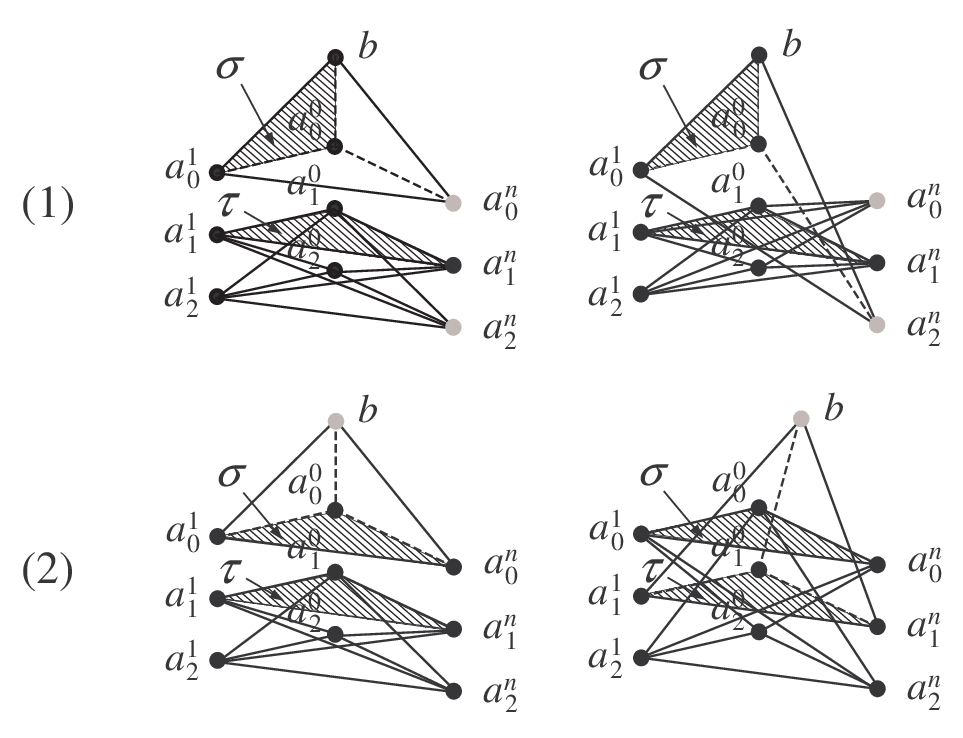}}
\caption{Exactly two pairs in $\Lambda^{n}(K^{(n)})$ containing $\sigma$ and $\tau$ in separate components ($n=2$)}
\label{n_tetraocta1}
\end{center}
\end{figure}

\begin{proof}[Proof of Theorem \ref{newIL}]
Recall that $H^{(n)}$ is isomorphic to $[3]^{*n+1}$. Therefore, Theorem \ref{newIL} follows directly from Lemma \ref{mainlem} and Theorem \ref{VKF}. 
\end{proof}

\begin{Remark}
Let $\sigma_{2n+3}$ be the $(2n+3)$-simplex $|a_{0}a_{1}\cdots a_{2n+3}|$ and $\sigma_{2n+3}^{n}$ its $n$-skeleton. Note that $\sigma_{2n+3}^{n}$ contains the $n$-skeleton of the $(2n+2)$-face $|a_{1}a_{2}\cdots a_{2n+3}|$ of $\sigma_{2n+3}$ as an $n$-subcomplex. Let $\Gamma^{1}$ denote the set of all $n$-tetrahedra in $\sigma_{2n+3}^{n}$ that contain $a_{0}$, and let $\Gamma^{2}$ denote the set of all $n$-tetrahedra in $\sigma_{2n+3}^{n}$ that do not contain $a_{0}$. Then any pair $\lambda$ in $\Lambda^{n}(\sigma_{2n+3}^{n})$ consists of a pair of an $n$-tetrahedron $\gamma_{1}$ in $\Gamma^{1}$ and an $n$-tetrahedron $\gamma_{2}$ in $\Gamma^{2}$ that are mutually disjoint.  Then, for any embedding $f$ of $\sigma_{2n+3}^n$ into $\mathbb{R}^{2n+1}$, in a similar way as the proof of Theorem \ref{newIL}, we can obtain the following: 
\begin{eqnarray*}
\sum_{\lambda\in \Lambda^{n}(\sigma_{2n+3}^n)}{\rm lk}_{2}(f(\lambda))
\equiv 1\pmod{2}.  
\end{eqnarray*}
This provides an alternative proof of Theorem \ref{oldIL} (1). 
\end{Remark}

\section{Higher dimensional $\triangle Y$-exchange} 

Let $K_{\triangle^{n}}$ and $K_{Y^{n}}$ be two simplicial $n$-complex such that $K_{Y^{n}}$ is obtained from $K_{\triangle^{n}}$ by a single $\triangle Y(n)$-exchange, as described in Section 1. Let $\Lambda_{\triangle^{n}}^{n}(K_{\triangle^{n}})$ denote the set of all pairs in $\Lambda^{n}(K_{\triangle^{n}})$ containing $\triangle^{n}$ as a component. Let $\lambda'$ be a pair in $\Lambda^{n}(K_{\triangle^{n}})$ that does not contain $\triangle^{n}$. Then we can see that there exists a pair $\Phi(\lambda')$ in $\Lambda^{n}(K_{Y^{n}})$ such that $\lambda'\setminus \triangle^{n} = \Phi(\lambda')\setminus Y^{n}$, and the correspondence from $\lambda'$ to $\Phi(\lambda')$ defines a surjective map $\Phi:\Lambda^{n}(K_{\triangle^{n}})\setminus \Lambda_{\triangle^{n}}^{n}(K_{\triangle^{n}})\to \Lambda^{n}(K_{Y^{n}})$.

Let $f$ be an embedding of $K_{Y^{n}}$ into $\mathbb{R}^{2n+1}$. Let $\widetilde{K}_{Y^{n}}$ be an simplicial $(n+1)$-complex defined by 
\begin{eqnarray*}
\widetilde{K}_{Y^{n}} 
= K_{Y^{n}}\cup \bigcup_{q=0}^{n+1}K(a_{0}\cdots \hat{a}_{q}\cdots a_{n+1}x). 
\end{eqnarray*}
Then by using a standard general position argument in piecewise linear topology, the embedding $f$ of $K_{Y^{n}}$ extends to an embedding $F$ of $\widetilde{K}_{Y^{n}}$ into $\mathbb{R}^{2n+1}$. Note that $\widetilde{K}_{Y^{n}}$ contains $K_{\triangle^{n}}$ as an $n$-subcomplex, and the restriction of $F$ to $K_{\triangle^{n}}$, denoted by $f'$, is an embedding of $K_{\triangle^{n}}$ into $\mathbb{R}^{2n+1}$. Then we have the following, where the case $n=1$ is shown in \cite[Proposition 2.1]{NT12}.

\begin{Lemma}\label{DeltaYiso}
Let $\lambda'$ be a pair in $\Lambda^{n}(K_{\triangle^{n}})\setminus \Lambda_{\triangle^{n}}^{n}(K_{\triangle^{n}})$. Then the link $f'(\lambda')$ is ambient isotopic to the link $f(\Phi(\lambda'))$. 
\end{Lemma}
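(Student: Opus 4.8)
The plan is to produce the ambient isotopy by \emph{pushing one of the two links across an embedded ball inside $F(\widetilde{K}_{Y^{n}})$}, the complex $\widetilde{K}_{Y^{n}}$ having been defined precisely so that such a ball is available. Write $K_{0}=K_{\triangle^{n}}\cap K_{Y^{n}}$ for the part untouched by the exchange, so that $K_{\triangle^{n}}=K_{0}\cup\triangle^{n}$, $K_{Y^{n}}=K_{0}\cup Y^{n}$, and $\sigma_{n+1}^{n-1}\subset K_{0}$. Observe that $P:=x*\triangle^{n}=\bigcup_{q=0}^{n+1}K(a_{0}\cdots\hat{a}_{q}\cdots a_{n+1}x)$ is a PL $(n+1)$-ball contained in $\widetilde{K}_{Y^{n}}$, with $\partial P=\triangle^{n}$ and $Y^{n}\subset P$. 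Since $f'=F|_{K_{\triangle^{n}}}$ and $f=F|_{K_{Y^{n}}}$ are both restrictions of the single embedding $F$, they agree with $F$, hence with each other, on $K_{0}$. We may assume $\lambda'\notin\Lambda^{n}_{\triangle^{n}}(K_{\triangle^{n}})$, so that $\Phi(\lambda')$ is defined, and write $\lambda'=\gamma_{1}'\sqcup\gamma_{2}'$, $\Phi(\lambda')=\gamma_{1}\sqcup\gamma_{2}$.

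First I would dispose of the degenerate case in which neither $\gamma_{1}'$ nor $\gamma_{2}'$ contains an $n$-simplex of $\triangle^{n}$: then $\lambda'$ meets $\triangle^{n}$ only inside $\sigma_{n+1}^{n-1}\subset K_{0}$, so $\lambda'\subset K_{0}$, and since $\lambda'\setminus\triangle^{n}=\lambda'\setminus Y^{n}$ the defining relation for $\Phi$ forces $\Phi(\lambda')=\lambda'$; as $f'=f=F$ on $K_{0}$ we get $f'(\lambda')=f(\Phi(\lambda'))$ literally. Otherwise, relabelling if necessary, $\gamma_{1}'$ contains an $n$-simplex of $\triangle^{n}$. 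A vertex count -- two disjoint unions of facets of $\partial\sigma_{n+1}$ each containing at least one facet would use more than $n+2$ vertices between them -- then shows that $\gamma_{2}'$ contains no $n$-simplex of $\triangle^{n}$ and, more precisely, is vertex-disjoint from $A:=\gamma_{1}'\cap\triangle^{n}$.

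Next I would pinpoint the ball to push across. Since $\gamma_{1}'$ is an $n$-sphere with $\gamma_{1}'\neq\triangle^{n}$, the subcomplex $A$ is a union of fewer than $n+2$ facets of $\partial\sigma_{n+1}$; being connected (again by a vertex count), it is a PL $n$-ball, and $\gamma_{1}'=C\cup A$ with $C:=\overline{\gamma_{1}'\setminus A}$ and $C\cap A=\partial A$. Put $W:=x*A\subset P$ and $B:=x*\partial A\subset Y^{n}$; then $W$ is a PL $(n+1)$-ball contained in $\widetilde{K}_{Y^{n}}$, with $\partial W=A\cup B$ and $A\cap B=\partial A$. Using $\gamma_{1}'\setminus\triangle^{n}=\gamma_{1}\setminus Y^{n}$ together with the fact that $B$ is the only $n$-dimensional subcomplex of $Y^{n}$ that is a manifold with boundary $\partial A$, one identifies $\gamma_{1}=C\cup B$; and $\gamma_{2}=\gamma_{2}'$ because $\gamma_{2}'$ meets $\triangle^{n}$ only inside $Y^{n}$.

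Finally I would run the push. Since $F$ is an embedding, $F(W)$ is an embedded PL $(n+1)$-ball in $\mathbb{R}^{2n+1}$ whose boundary sphere is $F(A)\cup F(B)$, the two hemispheres meeting along $F(\partial A)$; moreover $\gamma_{2}'\cap W=\emptyset$ and $C\cap W=\partial A$, so a sufficiently small regular neighborhood $N$ of $F(W)$ misses $F(\gamma_{2}')$ and meets $F(C)$ only near $F(\partial A)$. By the standard piecewise linear move pushing a ball across a ball -- the case $n=1$ being \cite[Proposition 2.1]{NT12} -- there is an ambient isotopy $\{h_{t}\}_{t\in[0,1]}$ of $\mathbb{R}^{2n+1}$, supported in $N$, with $h_{0}=\mathrm{id}$, $h_{1}(F(A))=F(B)$, and $h_{1}$ equal to the identity on a neighborhood of $F(B\cup\partial A)$. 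Then $h_{1}$ fixes $F(C)$ and $F(\gamma_{2}')$ pointwise, so $h_{1}(f'(\lambda'))=h_{1}\bigl(F(C\cup A)\sqcup F(\gamma_{2}')\bigr)=F(C\cup B)\sqcup F(\gamma_{2})=f(\Phi(\lambda'))$, which exhibits the required ambient isotopy. I expect the main obstacle to be the combinatorial bookkeeping of the third step -- checking that $A$ is a ball, that $\gamma_{1}=C\cup B$, and that $A$, $B$, $C$, $W$ and $\gamma_{2}'$ intersect exactly as claimed -- rather than the geometry; once $W$ has been located the push itself is routine, the one genuinely topological point being that $\widetilde{K}_{Y^{n}}$ was built so that $F$ extends over $W$.
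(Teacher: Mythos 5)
Your overall strategy is exactly the paper's: the paper's entire proof is the one--line assertion that $f'(\lambda')$ and $f(\Phi(\lambda'))$ differ by ``simplex moves'', i.e.\ by pushes across the $(n+1)$--simplices $x*\sigma$ of $P=x*\triangle^{n}\subset\widetilde{K}_{Y^{n}}$, and your identification of the ball $W=x*A$ (equivalently, a sequence of single--simplex pushes across the $x*\sigma$ for the facets $\sigma$ of $A$) together with the verifications that $A$ is an $n$--ball, that $W\cap\gamma_{1}'=A$, $\partial W\cap \gamma_1'=A$, and $W\cap\gamma_{2}'=\emptyset$, is a correct and much more detailed account of what the paper leaves implicit. (Two small bookkeeping points: $A$ should be defined as the union of the $n$--simplices of $\triangle^{n}$ lying in $\gamma_{1}'$ rather than as $\gamma_{1}'\cap\triangle^{n}$, which may pick up stray lower--dimensional faces; and the fact that a union of $k$ facets of $\partial\sigma_{n+1}$ with $1\le k\le n+1$ is a ball is cleanest via $\bigcup_{i\in S}F_{i}=(\partial\Delta_{S})*\Delta_{T}\cong S^{k-2}*B^{n+1-k}$ rather than a vertex count.)

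The one step that fails as written is the final push: the properties you demand of $h_{1}$ are mutually contradictory. If $h_{1}$ is the identity on a \emph{neighborhood} $U$ of $F(B\cup\partial A)$, then it fixes the points of $F(A)\cap U$ lying off $F(B)$ (such points exist, since $F(A)\cap F(B)=F(\partial A)$ is not open in $F(A)$), and this is incompatible with $h_{1}(F(A))=F(B)$; equivalently, an injection cannot send the sphere $F(A)\cup F(B)$ onto the ball $F(B)$. Consequently your deduction that $h_{1}$ fixes $F(C)$ pointwise does not go through: any isotopy supported in a neighborhood $N$ of $F(W)$ must move points in the collar $F(C)\cap N$ of $F(\partial A)$ inside $F(C)$, or at least cannot be assumed not to. The repair is standard and does not change your architecture: invoke the cellular--move lemma (Hudson, or Rourke--Sanderson) in its correct form, namely that if $D$ is a PL $(n+1)$--ball in a PL manifold $Q$ and $M\subset Q$ is a closed $n$--manifold with $D\cap M=\partial D\cap M=A$ an $n$--face of $\partial D$, then $M$ and $\overline{M\setminus A}\cup B$ (with $\partial D=A\cup B$) are ambient isotopic by an isotopy supported in any prescribed neighborhood of $D$. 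Applied to $M=F(\gamma_{1}')$ and $D=F(W)$, with the neighborhood chosen to miss $F(\gamma_{2}')$, this carries the whole component $F(C\cup A)$ to $F(C\cup B)$ (without fixing $F(C)$ pointwise, which is not needed) while fixing $F(\gamma_{2}')$, and the conclusion follows.
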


\begin{proof}
Two links $f'(\lambda')$ and $f(\Phi(\lambda'))$ are transformed into each other by so-called simplex moves. Thus we have the result. 
\end{proof}

\begin{proof}[Proof of Theorem \ref{deltayil}]
Let $f$ be an embedding of $K_{Y^{n}}$ into $\mathbb{R}^{2n+1}$. Then for the embedding $f'$ of $K_{\triangle^{n}}$ into $\mathbb{R}^{2n+1}$, there exists, by assumption, a pair $\lambda'$ in $\Lambda^{n}(K_{\triangle^{n}})$ such that ${\rm lk}_{2}(f'(\lambda'))=1$. Since $f'(\triangle^{n})$ bounds an $(n+1)$-ball $B$ in $\mathbb{R}^{2n+1}$ with $f'(K_{\triangle^{n}})\cap B = f'(K_{\triangle^{n}})\cap \partial B = f'(\triangle^{n})$, it follows that $\lambda'$ does not contain $\triangle^{n}$ as a component. Hence, $\lambda'$ is a pair in $\Lambda^{n}(K_{\triangle^{n}})\setminus \Lambda_{\triangle^{n}}^{n}(K_{\triangle^{n}})$. By Lemma \ref{DeltaYiso}, $f'(\lambda')$ is ambient isotopic to $f(\Phi(\lambda'))$, and thus ${\rm lk}_{2}(f(\Phi(\lambda'))=1$.
\end{proof}

Let $K$ be a simplicial $n$-complex. Let $\mathcal{F}_{\triangle^{n}}(K)$ denote the set of all simplicial $n$-complexes obtained from $K$ by a finite sequence of $\triangle Y(n)$-exchanges. For example, in the case of $n=1$, $\mathcal{F}_{\triangle^{1}}(K_{6})$ and $\mathcal{F}_{\triangle^{1}}(K_{3,3,1})$ share exactly three graphs: $P_{8},\,P_{9}$ and $P_{10}$, and their union forms the Petersen family. However, for $n\ge 2$, we have the following. 

\begin{Proposition}\label{hdPet}
For $n\ge 2$, $\mathcal{F}_{\triangle^{n}}(\sigma_{2n+3}^{n})$ and $\mathcal{F}_{\triangle^{n}}(K^{(n)})$ are disjoint. 
\end{Proposition}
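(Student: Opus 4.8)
\textbf{Proof proposal for Proposition \ref{hdPet}.}

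The plan is to find an invariant of simplicial $n$-complexes that is preserved under $\triangle Y(n)$-exchanges and that distinguishes $\sigma_{2n+3}^{n}$ from $K^{(n)}$. The most natural candidate is a counting invariant: track how the number of $n$-simplices, or more robustly some Euler-characteristic-type or $f$-vector quantity, changes under a single $\triangle Y(n)$-exchange. A single $\triangle Y(n)$-exchange replaces $\triangle^{n}=\partial\sigma_{n+1}$ (which has $n+2$ top-dimensional $n$-simplices) by $Y^{n}=\sigma_{n+1}^{n-1}*x$; one computes that $Y^{n}$ has $\binom{n+2}{n}=\binom{n+2}{2}$ top-dimensional $n$-simplices, each of the form (an $(n-1)$-face of $\sigma_{n+1}$) joined with $x$. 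Meanwhile the number of vertices increases by exactly one. So the quantity $N(K) := \#\varDelta^{n}(K) + c\cdot\#\varDelta^{0}(K)$ changes by a fixed amount $\binom{n+2}{2}-(n+2)+c$ under each exchange, independent of which $\triangle^{n}$ is chosen; picking $c = (n+2)-\binom{n+2}{2}$ makes $N$ a genuine invariant of $\triangle Y(n)$-exchanges. (Alternatively, and perhaps more cleanly, one can just record the pair $(\#\varDelta^{0}(K),\#\varDelta^{n}(K))$ and the single affine relation between its changes.)

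The key steps, in order, are as follows. First I would carefully count $\#\varDelta^{0}$ and $\#\varDelta^{n}$ for the two base complexes: $\sigma_{2n+3}^{n}$ has $2n+4$ vertices and $\binom{2n+4}{n+1}$ $n$-simplices, while $K^{(n)}$ has $3(n+1)+1 = 3n+4$ vertices, and its number of $n$-simplices is $3^{n+1}$ of Type I plus $(n+1)3^{n}$ of Type II, i.e. $3^{n+1}+(n+1)3^{n} = 3^{n}(n+4)$. Second, I would verify the effect of one $\triangle Y(n)$-exchange on $(\#\varDelta^{0},\#\varDelta^{n})$: vertices go up by $1$, $n$-simplices change by $\binom{n+2}{2}-(n+2)$, and crucially this is the same for every exchange regardless of the host complex. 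Third, I would observe that any member of $\mathcal{F}_{\triangle^{n}}(K)$ reached by $k$ exchanges has vertex count $\#\varDelta^{0}(K)+k$ and $n$-simplex count $\#\varDelta^{n}(K)+k\big(\binom{n+2}{2}-(n+2)\big)$, so the invariant $N(K)=\#\varDelta^{n}(K)+\big((n+2)-\binom{n+2}{2}\big)\#\varDelta^{0}(K)$ is constant on each family. Finally I would compute $N(\sigma_{2n+3}^{n})$ and $N(K^{(n)})$ and check they are unequal for all $n\ge 2$ (a direct arithmetic comparison of $\binom{2n+4}{n+1}$ and $3^{n}(n+4)$, corrected by the vertex terms); since the two values differ, the two families share no complex, which is exactly the assertion.

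The main obstacle I anticipate is twofold. First, one must be careful that a $\triangle Y(n)$-exchange genuinely only ever adds one vertex and modifies the $n$-simplex count in the stated uniform way — in particular that $Y^{n}=\sigma_{n+1}^{n-1}*x$ contributes no new $n$-simplices beyond the $\binom{n+2}{2}$ joins, that none of these coincide with simplices already present in $K_{\triangle^{n}}\setminus\triangle^{n}$ (so no unexpected identifications occur), and that lower-dimensional simplices, while they do change, are irrelevant to the chosen invariant. One should state the exchange combinatorially and note that the new vertex $x$ is disjoint from everything, which rules out collisions. Second, the arithmetic inequality $\binom{2n+4}{n+1}+\big((n+2)-\binom{n+2}{2}\big)(2n+4)\neq 3^{n}(n+4)+\big((n+2)-\binom{n+2}{2}\big)(3n+4)$ for all $n\ge 2$ needs a clean argument: for small $n$ one checks directly ($n=2$: $\sigma_{7}^{2}$ has $8$ vertices and $\binom{8}{3}=56$ triangles, $K^{(2)}$ has $10$ vertices and $3^{2}\cdot 6=54$ triangles, and with the correction term $(n+2)-\binom{n+2}{2}=4-6=-2$ the invariants are $56-16=40$ versus $54-20=34$, which differ), and for large $n$ the central binomial coefficient $\binom{2n+4}{n+1}$ grows exponentially faster than $3^{n}(n+4)$, so the difference is eventually dominated by that term and never returns to zero; a short monotonicity or growth-rate estimate finishes it. Neither obstacle is deep, but both require care to state rigorously rather than wave through.
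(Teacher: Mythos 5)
Your proposal is correct, but it takes a genuinely different route from the paper. The paper's invariant is local: it observes that a $\triangle Y(n)$-exchange preserves the $(n-1)$-skeleton of $K_{\triangle^{n}}$ and preserves the degree $\deg^{n}_{K}(\sigma)$ of every pre-existing $(n-2)$-simplex $\sigma$, while every newly created $(n-2)$-simplex (those containing $x$) has degree exactly $6$. Since every $(n-2)$-simplex of $\sigma_{2n+3}^{n}$ has degree $\binom{n+5}{2}$, while those of $K^{(n)}$ have degree $27$ or $15$, and $\binom{n+5}{2}\notin\{6,15,27\}$ for $n\ge 2$, the two families cannot meet; no asymptotic estimate is needed. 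Your invariant is instead global and purely enumerative: $N(K)=\#\varDelta^{n}(K)+\bigl((n+2)-\binom{n+2}{2}\bigr)\#\varDelta^{0}(K)$ is constant under exchanges (your counts $n+2$ removed, $\binom{n+2}{2}$ added, one new vertex, no collisions since every new $n$-simplex contains the fresh vertex $x$, are all correct), and your base-case computations ($\binom{2n+4}{n+1}$ versus $3^{n}(n+4)$, with $8$ versus $3n+4=10$ vertices for $n=2$) check out. The one step you leave genuinely unfinished is the inequality for all $n\ge2$; it does close cleanly: since $c=(n+2)-\binom{n+2}{2}=-\tfrac{(n+2)(n-1)}{2}\le 0$ and $K^{(n)}$ has $n$ more vertices than $\sigma_{2n+3}^{n}$, one gets $N(\sigma_{2n+3}^{n})-N(K^{(n)})\ge \binom{2n+4}{n+1}-3^{n}(n+4)$, and the latter is positive for all $n\ge2$ by an easy ratio induction (the successive ratio $\frac{(2n+6)(2n+5)}{(n+2)(n+4)}$ exceeds $\frac{3(n+5)}{n+4}$). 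A pleasant sanity check on your invariant: for $n=1$ the coefficient $c$ vanishes and $N$ reduces to the edge count, which equals $15$ for both $K_{6}$ and $K_{3,3,1}$ --- consistent with the fact that the two families genuinely do intersect in the Petersen family when $n=1$. The paper's degree argument is the more economical of the two (a finite comparison of small integers), but yours is more elementary in that it only counts simplices and requires no analysis of how degrees propagate.
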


Before showing Proposition \ref{hdPet}, we define the degree of a simplex in a simplicial $n$-complex. Let $K$ be a simplicial $n$-complex and $\sigma$ a $k$-simplex in $\varDelta^{k}(K)$. Let ${\rm deg}_{K}^{n}\sigma$ denote the number of $n$-simplices in $\varDelta^{n}(K)$ that contain $\sigma$ as a $k$-face. In particular, if $n=1$ and $\sigma$ is a $0$-simplex, then this corresponds to the degree of a vertex in a simple graph.

\begin{proof}[Proof of Proposition \ref{hdPet}] 
Assume that $n\ge 2$. In the $\triangle Y(n)$-exchange from $K_{\triangle^{n}}$ to $K_{Y^{n}}$, the $(n-1)$-skeleton $K_{\triangle^{n}}^{n-1}$ of $K_{\triangle^{n}}$ is contained in $K_{Y^{n}}$ as a subcomplex. For an $(n-2)$-simplex $\sigma$ in $\varDelta^{n-2}(Y^{n})$, we can see that the degree satisfies $\deg_{Y^{n}}^{n}(\sigma) = \deg_{\triangle^{n}}^{n}(\sigma)=3$ if $\sigma$ does not contain $x$, and $\deg_{Y^{n}}^{n}(\sigma) =6$ if $\sigma$ contains $x$. Thus, for an $(n-2)$-simplex $\sigma$ in $\varDelta^{n-2}(K_{Y^{n}})$, we have: 
\begin{eqnarray}\label{deg}
\deg_{K_{Y^{n}}}^{n}(\sigma) =
\begin{cases}
\deg_{K_{\triangle^{n}}}^{n}(\sigma) & \text{if}\ \sigma\in\varDelta^{n-2}(K_{\triangle^{n}}^{n-1}), \\
6 & \text{otherwise}.
\end{cases}
\end{eqnarray}
Note that for every $(n-2)$-simplex $\tau$ in $\varDelta^{n-2}(\sigma_{2n+3}^{n})$, we have ${\rm deg}_{\sigma_{2n+3}^{n}}^{n}\tau = \tbinom{n+5}{2}$. On the other hand, for every $(n-2)$-simplex $\tau'$ in $\varDelta^{n-2}(K^{(n)})$, we have ${\rm deg}_{K^{(n)}}^{n}\tau' = 27$ if $\tau'$ contains $b$, and ${\rm deg}_{K^{(n)}}^{n}\tau' = 15$ otherwise. Since $\tbinom{n+5}{2}\neq 27,15$ for $n\ge 2$, it follows from (\ref{deg}) that $\mathcal{F}_{\triangle^{n}}(\sigma_{2n+3}^{n})$ and $\mathcal{F}_{\triangle^{n}}(K^{(n)})$ are disjoint. 
\end{proof} 

\begin{Problem}\label{family}
For a positive integer $n\ge 2$, list all elements of the sets $\mathcal{F}_{\triangle^{n}}(\sigma_{2n+3}^{n})$ and $\mathcal{F}_{\triangle^{n}}(K^{(n)})$. 
\end{Problem}

The author considers Problem \ref{family} to be generally challenging and thus will not explore it further here. Instead, we introduce a distinguished simplicial $n$-complex in $\mathcal{F}_{\triangle^{n}}(K^{(n)})$ that is terminal with respect to the sequence of $\triangle Y(n)$-exchanges. For the subcomplex $H^{(n)}$ of $K^{(n)}$, let $\Xi(H^{(n)})$ denote the subset of $\varDelta^{n}(H^{(n)})$ defined by 
\begin{eqnarray*}
\Xi(H^{(n)}) = \Big\{|a_{j_{0}}^{0}a_{j_{1}}^{1}\cdots a_{j_{n}}^{n}|\ \big|\ \sum_{i=0}^{n}j_{i}\equiv 0\pmod{3}\Big\}. 
\end{eqnarray*}
This set consists of $3^{n}$ $n$-simplices. For any two distinct $n$-simplices $|a_{j_{0}}^{0}a_{j_{1}}^{1}\cdots a_{j_{n}}^{n}|$ and $|a_{k_{0}}^{0}a_{k_{1}}^{1}\cdots a_{k_{n}}^{n}|$ in $\varDelta^{n}(H^{(n)})$, the $n$-simplex $|a_{j_0+k_{0}}^{0}a_{j_{1}+k_{1}}^{1}\cdots a_{j_{n}+k_{n}}^{n}|$ also belongs to $\Xi(H^{(n)})$, where each $j_{q}+k_{q}$ is taken modulo $3$. Furthermore, if $|a_{j_{0}}^{0}a_{j_{1}}^{1}\cdots a_{j_{n}}^{n}|$ and $|a_{k_{0}}^{0}a_{k_{1}}^{1}\cdots a_{k_{n}}^{n}|$ share an $(n-1)$-face, then there exists a unique $q$ such that $j_{q}+k_{q}\neq 0$, and $j_{i}+k_{i}=0$ for all $i\neq q$. This implies that $|a_{j_0+k_{0}}^{0}a_{j_{1}+k_{1}}^{1}\cdots a_{j_{n}+k_{n}}^{n}|$ does not belong to $\Xi(H^{(n)})$, which leads to a contradiction. Therefore, $\Xi(H^{(n)})$ is a set of $n$-simplices in $H^{(n)}$ such that no two of them share any $(n-1)$-faces. We then define the subset $\Gamma_{\Xi}^{1}$ of $\Gamma^{1}$ as  
\begin{eqnarray*}
\Gamma_{\Xi}^{1} = \{\partial|ba_{j_{0}}^{0}a_{j_{1}}^{1}\cdots a_{j_{n}}^{n}|\ |\ |a_{j_{0}}^{0}a_{j_{1}}^{1}\cdots a_{j_{n}}^{n}|\in \Xi(H^{(n)})\}. 
\end{eqnarray*}
Since no two $n$-tetrahedra in $\Gamma_{\Xi}^{1}$ share an $n$-simplex, we can apply $3^{n}$ $\triangle Y(n)$-exchanges to $K^{(n)}$ sequentially. In particular, let $P^{(n)}$ denote the simplicial $n$-complex obtained from $K^{(n)}$ by applying $\triangle Y(n)$-exchanges to all of these $3^{n}$ $n$-tetrahedra. For example, we can see from Fig. \ref{Petersen} that $P^{(1)}$ corresponds to the Petersen graph $P_{10}$. Note that $P_{10}$ is trivalent, meaning that every vertex has degree three. Similarly, we say that a simplicial $n$-complex $K$ is \textit{trivalent} if every $(n-1)$-simplex $\sigma$ in $\varDelta^{n-1}(K)$ has degree ${\rm deg}_{K}^{n}\sigma =3$. Then we have the following.

\begin{Proposition}
For a positive integer $n$, $P^{(n)}$ is trivalent. 
\end{Proposition}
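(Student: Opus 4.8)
The plan is to verify the trivalence condition directly by analyzing what the $\triangle Y(n)$-exchanges do to the degrees of $(n-1)$-simplices, combined with the combinatorial structure of $\Xi(H^{(n)})$ established just above. First I would compute $\deg_{K^{(n)}}^{n}\sigma$ for every $(n-1)$-simplex $\sigma$ in $K^{(n)}$. There are two kinds of such $\sigma$: those contained in $H^{(n)}\cong [3]^{*n+1}$, i.e. of the form $|a_{j_{0}}^{0}\cdots\hat{a}_{j_{q}}^{q}\cdots a_{j_{n}}^{n}|$ (missing exactly one factor $V^q$), and those of Type II containing $b$, i.e. of the form $|ba_{j_{0}}^{0}\cdots\hat{a}_{j_{q}}^{q}\cdots\hat{a}_{j_{r}}^{r}\cdots a_{j_{n}}^{n}|$ (missing $b$-free and two factors). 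For the first kind, such an $(n-1)$-face sits inside $3$ Type-I $n$-simplices (choosing the missing $a^q_{j_q}\in V^q$) and inside $1$ Type-II $n$-simplex (namely $|b a_{j_0}^0\cdots\hat{a}^q\cdots a_{j_n}^n|$), giving degree $4$ — wait, I must recount: actually each such $(n-1)$-face lies in the $n$-simplices obtained by adding back one vertex from the omitted set, so $3$ Type-I simplices, plus the Type-II simplex $|ba_{j_0}^0\cdots\hat{a}^q\cdots a_{j_n}^n|$, total $4$. For the second kind (containing $b$), adding back a vertex from either of the two omitted factors yields $3+3=6$ Type-II $n$-simplices, so degree $6$. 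So before the exchanges, degrees are $4$ and $6$, not $3$; the exchanges must fix this.

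Next I would track a single $\triangle Y(n)$-exchange applied to $\partial|ba^0_{j_0}\cdots a^n_{j_n}|$ where $|a^0_{j_0}\cdots a^n_{j_n}|\in\Xi(H^{(n)})$. By equation (\ref{deg}) in the proof of Proposition~\ref{hdPet}, an $(n-1)$-simplex of $K_{Y^n}$ retains its $K_{\triangle^n}$-degree if it lies in the $(n-1)$-skeleton $K_{\triangle^n}^{n-1}$, and otherwise (i.e. if it is a new $(n-1)$-simplex containing the new vertex $x$) it has degree... here I need the analogue for $(n-1)$-faces rather than $(n-2)$-faces. The $n$-tetrahedron $\partial\sigma_{n+1}$ has $n+2$ facets, and replacing it by $Y^n=\sigma_{n+1}^{n-1}*x$ removes each of those $n+2$ $n$-simplices and adds $n+2$ new ones (the cone on each $(n-1)$-face of $\sigma_{n+1}$ with apex $x$). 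Consequently, each of the $\binom{n+2}{n}=\binom{n+2}{2}$ original $(n-1)$-faces of $\sigma_{n+1}$ loses one from its degree (it was in two facets of $\partial\sigma_{n+1}$, now in one cone $n$-simplex), while the new $(n-1)$-faces containing $x$ each have degree... each $(n-1)$-face containing $x$ is $|x\cdot(\text{an }(n-2)\text{-face of }\sigma_{n+1})|$, and it lies in exactly $3$ of the new $n$-simplices since that $(n-2)$-face extends to $3$ distinct $(n-1)$-faces of $\sigma_{n+1}$ inside $\partial\sigma_{n+1}$ (as $\sigma_{n+1}$ has $n+2$ vertices, an $(n-2)$-face misses $3$ of them)... wait, an $(n-1)$-face of $\sigma_{n+1}$ has $n+1$ vertices and $\sigma_{n+1}$ has $n+2$, so it misses $1$; an $(n-2)$-face has $n$ vertices, missing $2$. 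So $|x\cdot\text{that }(n-2)\text{-face}|$ lies in exactly the cones over the $2$ $(n-1)$-faces obtained by adding back one of the $2$ missing vertices, so it has degree $2$ — no wait, plus we need to recount whether $x$-faces can pick up degree from elsewhere in $K_{Y^n}$; they cannot, since $x$ only appears in $Y^n$. Hmm, so $x$-containing $(n-1)$-faces get degree $2$, not $3$. Let me reconsider: I think the relevant count is that in $\sigma_{n+1}^{n-1}*x$, the link of an $(n-1)$-face equals... I'll need to redo this carefully, but the upshot should be: each $(n-1)$-face of the original $\triangle^n$ has its degree decreased by $1$ (from contributing $2$ to the two facets it was in, down to $1$), and the new $x$-faces get degree $3$.

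Then I would assemble the global count. A key point, already proved in the excerpt, is that the $n$-simplices in $\Xi(H^{(n)})$ pairwise share no $(n-1)$-face; hence the $n$-tetrahedra in $\Gamma^1_\Xi$ pairwise share no $n$-simplex and moreover the $3^n$ exchanges are "independent" — no $(n-1)$-face of $K^{(n)}$ is affected by two different exchanges as an original $\triangle^n$-facet, and the new $x_\ell$-vertices are all distinct. So the final degree of an $(n-1)$-simplex $\sigma$ of $P^{(n)}$ is: if $\sigma$ is one of the new $x_\ell$-faces, degree $3$ (by the local computation); if $\sigma$ survives from $K^{(n)}$, its degree is $\deg_{K^{(n)}}^n\sigma$ minus the number of tetrahedra $\partial|ba^0_{j_0}\cdots a^n_{j_n}|\in\Gamma^1_\Xi$ having $\sigma$ as a facet. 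The main obstacle — and the crux of the argument — is the bookkeeping showing that this subtracted number is exactly $1$ for each kind of surviving $(n-1)$-face, so that degree $4$ becomes $3$ and degree $6$ becomes... hmm, that gives $5$, not $3$, which means I have miscounted the original degrees or the number of incident exchanges, and I would need to recheck: likely an $(n-1)$-face containing $b$ is a facet of two or three tetrahedra in $\Gamma^1_\Xi$. Concretely, $\sigma=|ba^0_{j_0}\cdots\hat{a}^q\cdots\hat{a}^r\cdots a^n_{j_n}|$ is a facet of $\partial|ba^0_{j_0}\cdots a^n_{j_n}|$ exactly when filling in $a^q_{j_q}\in V^q$ and $a^r_{j_r}\in V^r$ makes $\sum j_i\equiv 0\pmod 3$; given the other $n-1$ coordinates are fixed, the pairs $(j_q,j_r)\in\{0,1,2\}^2$ with prescribed sum form a set of size $3$. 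So $\sigma$ is a facet of exactly $3$ tetrahedra in $\Gamma^1_\Xi$, and $6-3=3$. Similarly an $(n-1)$-face $\sigma$ in $H^{(n)}$ of the form $|a^0_{j_0}\cdots\hat{a}^q\cdots a^n_{j_n}|$: it is a facet of $\partial|ba^0_{j_0}\cdots a^n_{j_n}|\in\Gamma^1_\Xi$ iff the fill-in $a^q_{j_q}$ satisfies the congruence, which pins down $j_q$ uniquely, so it lies in exactly $1$ such tetrahedron, and $4-1=3$. This resolves the apparent discrepancy, and confirms trivalence; I would write up these three cases (new $x$-faces, $b$-faces of $H^{(n)}$-type, $b$-free faces) as the body of the proof, citing the pairwise-disjointness of $(n-1)$-faces among $\Xi(H^{(n)})$ and equation (\ref{deg}).
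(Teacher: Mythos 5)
Your strategy is essentially the paper's: the paper also splits the $(n-1)$-simplices of $P^{(n)}$ into the same three classes (the $b$-free ones in $H^{(n)}$, the $b$-containing ones, and the new ones containing a cone point $x$) and for each class exhibits exactly three incident $n$-simplices; you reach the same three counts ($4-1$, $6-3$, and $3$) by instead computing degrees in $K^{(n)}$ and subtracting one per incident exchange. All of your final numbers are correct, but three slips in the write-up need repair. (i) $Y^{n}=\sigma_{n+1}^{n-1}*x$ has $\tbinom{n+2}{2}$ top-dimensional simplices, one for each $(n-1)$-face of $\sigma_{n+1}$, not $n+2$; this is harmless for the degree count but wrong as stated. (ii) You leave the degree of the new $x$-faces genuinely unresolved between $2$ and $3$ (``I'll need to redo this carefully''). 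Your first count ($3$) is the correct one: a $k$-simplex has $k+1$ vertices, so an $(n-1)$-face of $\sigma_{n+1}$ has $n$ vertices (missing $2$ of the $n+2$) and an $(n-2)$-face $\rho$ has $n-1$ vertices (missing $3$); hence $\rho*x$ lies in exactly the three cones $\tau*x$ with $\rho\subset\tau\subset\sigma_{n+1}$, and since $x$ occurs in no other simplex of $P^{(n)}$, its degree is $3$. Your recount to $2$ came from giving these faces one vertex too many. (iii) The ``independence'' claim that no $(n-1)$-face of $K^{(n)}$ is touched by two different exchanges is false, and you refute it yourself two sentences later when you show each $b$-containing $(n-1)$-face is a face of exactly three tetrahedra in $\Gamma^{1}_{\Xi}$. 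What your subtraction formula actually needs is only that the $n$-simplices removed by distinct exchanges are pairwise distinct, i.e.\ that no two tetrahedra in $\Gamma^{1}_{\Xi}$ share an $n$-simplex; that is exactly what the paper's discussion of $\Xi(H^{(n)})$ provides, so cite that instead. With these corrections your argument is complete and equivalent to the paper's.
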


\begin{proof}
The $(n-1)$-simplices in $\varDelta^{n-1}(P^{(n)})$ are of the following three types: 

\vspace{0.2cm}
\noindent
(1) those of the form $|a_{j_{0}}^{0}\cdots \hat{a}_{j_{q}}^{q}\cdots a_{j_{n}}^{n}|$, 

\vspace{0.2cm}
\noindent
(2)  those of the form $|ba_{j_{0}}^{0}\cdots \hat{a}_{j_{p}}^{p}\cdots \hat{a}_{j_{q}}^{q}\cdots a_{j_{n}}^{n}|$, 

\vspace{0.2cm}
\noindent
(3) those generated by each of the $\triangle Y(n)$-exchanges. 

\vspace{0.2cm}
\noindent

First, let us consider $(n-1)$-simplexes of type (1). For the index $j_{q}$ satisfying $\sum_{i=0}^{n}j_{i}\equiv 0\pmod{3}$, the $\triangle Y(n)$-exchange at $\triangle^{n}=\partial |ba_{j_{0}}^{0}\cdots a_{j_{q}}^{q}\cdots a_{j_{n}}^{n}|$ has been applied. Let $x$ be the central $0$-simplex of the corresponding $Y^{n}$. Then the $(n-1)$-simplex $|a_{j_{0}}^{0}\cdots \hat{a}_{j_{q}}^{q}\cdots a_{j_{n}}^{n}|$ is shared by exactly three $n$-simplices: 
\begin{eqnarray*}
|a_{j_{0}}^{0}\cdots a_{j'_{q}}^{q}\cdots a_{j_{n}}^{n}|,\ |a_{j_{0}}^{0}\cdots a_{j''_{q}}^{q}\cdots a_{j_{n}}^{n}|,\ |a_{j_{0}}^{0}\cdots \hat{a}_{j_{q}}^{q}\cdots a_{j_{n}}^{n}x|, 
\end{eqnarray*}
where $j_{q},j'_{q}$ and $j''_{q}$ are mutually distinct. 

Next, let us consider $(n-1)$-simplexes of type (2). Note that the $n$-simplices in $\varDelta^{n}(P^{(n)})$ of the form $|ba_{j_{0}}^{0}\cdots \hat{a}_{j_{q}}^{q}\cdots a_{j_{n}}^{n}|$ do not exist, as the $\triangle Y(n)$-exchange at $\partial |ba_{j_{0}}^{0}\cdots a_{j_{q}}^{q}\cdots a_{j_{n}}^{n}|$ has been applied for the index $j_{q}$ satisfying $\sum_{i=0}^{n}j_{i}\equiv 0\pmod{3}$. Let us take the indices $j_{p}$ and $j_{q}$ satisfying $\sum_{i=0}^{n}j_{i}\equiv 0\pmod{3}$. There are exactly three such pairs $(j_{p},j_{q})$, $(j'_{p},j'_{q})$ and $(j''_{p},j''_{q})$. Consider the $\triangle Y(n)$-exchanges at $\triangle^{n}=\partial |ba_{j_{0}}^{0}\cdots a_{j_{p}}^{p}\cdots a_{j_{q}}^{q}\cdots a_{j_{n}}^{n}|$, $\partial |ba_{j_{0}}^{0}\cdots a_{j'_{p}}^{p}\cdots a_{j'_{q}}^{q}\cdots a_{j_{n}}^{n}|$ and $\partial |ba_{j_{0}}^{0}\cdots a_{j''_{p}}^{p}\cdots a_{j''_{q}}^{q}\cdots a_{j_{n}}^{n}|$. Let $x,x'$ and $x''$ denote the central $0$-simplices of the corresponding $Y^{n}$, respectively. Then the $(n-1)$-simplex $|ba_{j_{0}}^{0}\cdots \hat{a}_{j_{p}}^{p}\cdots \hat{a}_{j_{q}}^{q}\cdots a_{j_{n}}^{n}|$ is shared by exactly three $n$-simplices: 
\begin{eqnarray*}
|ba_{j_{0}}^{0}\cdots \hat{a}_{j_{p}}^{p}\cdots \hat{a}_{j_{q}}^{q}\cdots a_{j_{n}}^{n}x|,\ 
|ba_{j_{0}}^{0}\cdots \hat{a}_{j_{p}}^{p}\cdots \hat{a}_{j_{q}}^{q}\cdots a_{j_{n}}^{n}x'|,\ 
|ba_{j_{0}}^{0}\cdots \hat{a}_{j_{p}}^{p}\cdots \hat{a}_{j_{q}}^{q}\cdots a_{j_{n}}^{n}x''|. 
\end{eqnarray*}

Finally, let us consider $(n-1)$-simplices of type (3). These are of the form $\tau*x$, where $\tau$ is an $(n-2)$-simplex in $\varDelta^{n-2}(\triangle^{n})$. Let $a, a'$ and $a''$ be the $0$-simplices in $\varDelta^{0}(\triangle^{n})$ that are not contained in $\tau$. Then, the $(n-1)$-simplex $\tau*x$ is shared by exactly three $n$-simplices: $\tau*x*a$, $\tau*x*a'$ and $\tau*x*a''$. 
\end{proof}


%
{\normalsize
}


\begin{thebibliography}{99}


\bibitem{CG83}
J. H. Conway and C. McA. Gordon, 
Knots and links in spatial graphs, 
\textit{J. Graph Theory} {\bf 7} (1983), 445--453.

\bibitem{Flores32}
A. Flores, 
\"{U}ber $n$-dimensionale Komplexe die im $R_{2n+1}$ absolut selbstverschlungen sind, \textit{Ergeb. Math. Kolloq.} {\bf 6} (1932/1934), 4--7.


\bibitem{VK33}
E. R. van Kampen, 
Komplexe in euklidischen R\"{a}umen, \textit{Abh. Math. Seminar Univ. Hamburg} {\bf 9} (1933), 72--78.


\bibitem{H69}
J. F. P. Hudson, 
Piecewise linear topology, 
University of Chicago Lecture Notes prepared with the assistance of J. L. Shaneson and J. Lees, \textit{W. A. Benjamin, Inc., New York-Amsterdam,} 1969. 


\bibitem{KS}
R. Karasev and  A. Skopenkov, 
Some `converses' to intrinsic linking theorems, 
{\it Discrete Comput. Geom.} {\bf 70} (2023), 921--930.



\bibitem{LS98}
L. Lov\'{a}sz and A. Schrijver, 
A Borsuk theorem for antipodal links and a spectral characterization of linklessly embeddable graphs, \textit{Proc. Amer. Math. Soc.} {\bf 126} (1998), 1275--1285. 

\bibitem{NT12}
R. Nikkuni and K. Taniyama, 
$\triangle Y$-exchanges and the Conway-Gordon theorems, 
\textit{J. Knot Theory Ramifications} \textbf{21} (2012), 1250067. 


\bibitem{RST95}
N. Robertson, P. Seymour and R. Thomas, 
Sachs' linkless embedding conjecture,  
\textit{J. Combin. Theory Ser. B} {\bf 64} (1995), 185--227. 

\bibitem{Rolfsen76}
D. Rolfsen, 
Knots and links, \textit{Mathematics Lecture Series,} {\bf 7}, Publish or Perish, Inc., Berkeley, CA, 1976.

\bibitem{RS82}
C. P. Rourke and B. J. Sanderson, 
Introduction to piecewise-linear topology, 
Springer Study Ed, \textit{Springer-Verlag, Berlin-New York,} 1982. viii+123 pp. 



\bibitem{S84}
H. Sachs, 
On spatial representations of finite graphs, 
\textit{Finite and infinite sets, Vol. I, II (Eger, 1981),} 649--662, 
Colloq. Math. Soc. Janos Bolyai, {\bf 37}, \textit{North-Holland, Amsterdam,} 1984.



\bibitem{Sk03}
M. Skopenkov, 
Embedding products of graphs into Euclidean spaces, \textit{Fund. Math.} {\bf 179} (2003), 191--198. 


\bibitem{T00}
K. Taniyama, 
Higher dimensional links in a simplicial complex embedded in a sphere, 
\textit{Pacific J. Math.} {\bf 194} (2000), 465--467. 


\bibitem{Tu13}
C. Tuffley, 
Some Ramsey-type results on intrinsic linking of $n$-complexes, 
\textit{Algebr. Geom. Topol.} {\bf 13} (2013), 1579--1612. 








\end{thebibliography}
\end{document}